\newcommand{\Hilb}{{\operatorname{Hilb}}}
\newcommand{\Gr}{{\operatorname{Gr}}}
\newcommand{\rat}{{\operatorname{rat}}}
\renewcommand{\Im}{{\operatorname{Im}}}
\newcommand{\Jac}{{\operatorname{Jac}}}
\newcommand{\Hom}{\operatorname{Hom}}
\newcommand{\End}{\operatorname{End}}
\newcommand{\CH}{\operatorname{CH}}
\newcommand{\Corr}{\operatorname{Corr}}
\newcommand{\Spec}{\operatorname{Spec}}
\newcommand{\Ker}{\operatorname{Ker}}
 \newcommand{\Alb}{\operatorname{Alb}}
\renewcommand{\dim}{\operatorname{dim}}
\newcommand{\Mot}{\operatorname{Mot}}
\renewcommand{\Alb}{\operatorname{Alb}}
 \newcommand{\Prym}{\operatorname{Prym}}
\renewcommand{\Im}{\operatorname{Im}}
\newcommand{\C}{\mathbf{C}}
\renewcommand{\L}{\mathbf{L}}
\renewcommand{\P}{\mathbf{P}}
\newcommand{\Q}{\mathbf{Q}}
\newcommand{\Z}{\mathbf{Z}}
\newcommand{\un}{\mathbf{1}}
\newcommand{\A}{\mathbf{A}}
 \newcommand{\sC}{\mathcal{C}}
\newcommand{\sA}{\mathcal{A}}
\newcommand{\sM}{\mathcal{M}}
\newcommand{\sO}{\mathcal{O}}
\newcommand{\sB}{\mathcal{B}}
 \newcommand{\sH}{\mathcal{H}}
 \newcommand{\sE}{\mathcal{E}}
\newcommand{\sI}{\mathcal{I}}
 \numberwithin{equation}{section}
\theoremstyle{plain}
\newtheorem{thm}[equation]{Theorem}
\newtheorem{prop}[equation]{Proposition}
\newtheorem{lm}[equation]{Lemma}
\newtheorem{cor}[equation]{Corollary}
\newtheorem{conj}[equation]{Conjecture}
\theoremstyle{definition}
\newtheorem{defn}[equation]{Definition}
\newtheorem{rk}[equation]{Remark}
\begin{document}

\title{The transcendental motive of a cubic fourfold}

\author[M. Bolognesi]{Michele Bolognesi}
\address{Institut Montpellierain Alexander Grothendieck \\ %
Universit\'e de Montpellier \\ %
Case Courrier 051 - Place Eug\`ene Bataillon \\ %
34095 Montpellier Cedex 5 \\ %
France}
\email{michele.bolognesi@umontpellier.fr}

\author[C. Pedrini]{Claudio Pedrini}
\address{Dipartimento di Matematica \\ %
Universit\'a degli Studi di Genova \\ %
Via Dodecaneso 35 \\ %
16146 Genova \\ %
Italy}
\email{pedrini@dima.unige.it}

\begin{abstract} 
In this note we introduce the transcendental part $t(X)$ of the motive of a cubic fourfold  $X$ and prove that it is isomorphic to the (twisted) transcendental part $h^{tr}_2(F(X))$ in a suitable Chow-K\"unneth decomposition for the motive of the Fano variety of lines $F(X)$. Then we prove that $t(X)$ is isomorphic to the {\it Prym motive} associated to the surface $S_l \subset F(X)$ of lines meeting a general line $l$. If $X$ is a special cubic fourfold in the sense of Hodge theory,  and $F(X) \simeq S^{[2]}$, with $S$ a K3, then we show that  $t(X)\simeq t_2(S)(1)$, where $t_2(S)$ is the transcendental motive. Therefore the motive $h(X)$ is finite dimensional if and only if $S$ has a finite dimensional motive. If $X$ is very general then $t(X)$ cannot be isomorphic to the (twisted) transcendental motive of a surface. We relate the existence of an isomorphism $t(X) \simeq t_2(S)(1)$ to conjectures by Hassett and Kuznetsov on the rationality of a special cubic fourfold. Finally we consider the case of cubic fourfolds $X$ admitting a fibration over $\P^2$, whose fibers are either quadrics or del Pezzo surfaces of degree 6, and prove the isomorphism $t_2(S)(1) \simeq t(X)$, with $S$ a K3 surface.

\end{abstract}

\maketitle 
\section {Introduction} 
We will work over the complex field. Cubic fourfolds are among the most mysterious objects in algebraic geometry. Despite the simplicity of the definition of such classically flavoured objects, the birational geometry of cubic fourfolds is extremely hard to understand and many modern techniques (Hodge theory, derived categories, etc. - see \it e.g. \rm \cite{Kuz,Has 2,AT} for details) have been successfully deployed in order to have a deeper understanding. In any case, the rationality of the generic cubic fourfold is still an open problem. Also the finite dimensionality of the motive $h(X)$  of a cubic fourfold, as conjectured by several authors (see \cite{Ki} , \cite{An}),  is known to hold only in some scattered cases.\par
We will denote by  $\sM_{rat}(\C)$   the  (covariant) category  of  Chow motives (with $\Q$-coefficients), whose objects are of the form $(X,p,n)$, where $X$ is a smooth projective variety over $\C$ of dimension $d$, $p$ is an idempotent in the ring $A^d(X \times X)= CH^d(X \times X)\otimes \Q $ and $n \in \Z$.  If $X$ and $Y$ are smooth  projective varieties  over $\C$, then the morphisms  $\Hom_{\sM_{rat}}(h(X),h(Y)) $ of their motives $h(X)$ and $h(Y)$ are given by correspondences in the Chow groups $A^*( X \times Y) =CH^*(X\times Y) \otimes \Q$. More precisely, in our covariant setting, we have
$$\Hom_{\sM_{rat}}(X,p,m),(Y,q,n)) =q\circ A_{d+m-n}(X\times 
Y)\circ p \subset A_{d+m-n}(X\times Y)$$
where  $d =\dim X$ and $\circ $ means composition of correspondences. The category $\sM_{rat}(\C)$ is additive,pseudo-abelian, rigid and has a tensor structure (see \cite{KMP}).
The unit motive is  $\un =(\Spec(\C , 1, 0)$: it is a unit for the tensor structure. The
{\it Lefschetz motive} $\L$ is   defined via the motive  of the projective line: $h (\P^1) = \un \oplus \L$ and there is  an isomorphism $\L\simeq(\Spec(k),1,1)$
 For every motive $M=(X,p,m)$ the Tate twist $M(r)$ is  the motive $(X,p,m+r)$. Note that,
with our  covariant convention, $M(r)\simeq M\otimes \L^{\otimes r}$ for $r\ge 0$.\par
\noindent The Chow groups of a motive $(X,p,m)\in \sM_{rat}(\C)$ are defined as follows
\begin{align*}
A^i(X,p,m) &= \Hom_{\sM_{rat}}((X,p,m),\L^i)= p^*A^{i-m}(X)\\
A_i(X,p,m)&= \Hom_{\sM_{rat}}((\L^i,(X,p,m))=p_*A_{i-m}(X).
\end{align*}
A similar definition holds for the category $\sM_{hom}(\C)$ of homological motives, with respect to singular cohomology $H^*(X)$, where
$$H^i(X,p,m)= p^*H^{i-2m}(X) \  ;   \  H_i(X,p,m)= p_*H_{i-2m}(X).$$ 
Let $X$ be a smooth projective variety over $\C$. We say that its motive $h(X)\in \sM_{rat}(\C)$ has a{ \it Chow-K\"unneth decomposition }(C-K for short)
if there exist orthogonal projectors
$\pi_i=\pi_i(X) \in \Corr_0(X,X)=A^d(X \times X)$, for $0 \le i \le 2d $,
such  that
$cl^d(\pi_i)$ is the $(i,2d-i)$-component of $\Delta_X$ in
$H^{2d}(X\times X)$ and
\[[\Delta _X] = \sum_{0 \le i \le 2d} \pi_i.\]
This implies that in $\sM_\rat$ the motive $h(X)$ 
decomposes as follows:
$$h(X) =\bigoplus_{0 \le i \le 2d} h_{i}(X),$$
where $h_{i}(X) =(X,\pi_i, 0)$. Moreover
\[H^*(h_i(X))=H^i(X), \qquad H_*(h_i(X))=H_i(X)\]
If we have $\pi_i=\pi_{2d-i}^t$ for all $i$, we say that the C-K
decomposition is \emph{self-dual}.\par
By the results in \cite[7.2.3]{KMP} every smooth projective surface $S$ has a {\it reduced C-K decomposition} $ h(S)=\sum_{0\le 4}h_i(S)$ with
$$h_2(S) =h^{alg}_2(S) \oplus t_2(S)= (S,\pi^{alg}_2) \oplus (S,\pi^{tr}_2).$$
Here  $h^{alg}_2(S) \simeq \L^{\rho(S)}$, where $\rho(S)$ is the rank of the Neron-Severi group and
$$H^2(S)=H^2_{alg}(S) \oplus H^2_{tr}(S) 
=\pi^{alg}_2 H^2(S)\oplus  \pi^{tr}_2 H^2(S)$$
The motive $t_2(S)$ is called the {\it transcendental motive} of $S$. It is a birational  invariant and
$$ H^*(t_2(S)) =H^2(t_2(S) =T(S)_{\Q} \  ; \  A^2(t_2(S)\simeq K(S),$$
where $T(S)$ is the transcendental lattice and  $K(S)$ is the Albanese kernel, i.e the kernel of the map $A_0(S)_{hom}  \to \Alb(S)$.\par
We recall the definition of finite dimensionality  introduced by S.Kimura in \cite{Ki}.
Let  $M \in \sM_{\rat}(\C)$ and let $\Sigma _n$ be the symmetric group of order $n$. Then  $\wedge^n M $
is the image of $M(X^n)$ under the projector
$$(1/ n!) (\sum_{\sigma \in \Sigma_n} sgn(\sigma) 
\Gamma_{\sigma}$$
while $S^nM$is its image under the projector
$$(1/ n!) (\sum_{\sigma \in \Sigma_n}  
\Gamma_{\sigma}.$$
A motive $M$ is said to be \emph{evenly (oddly) finite-dimensional} if
$\wedge^n M=0$ ($S^nX=0$) for some $n$.
 A motive $M$  is finite-dimensional if it can be decomposed into 
a direct sum
   $M_+\oplus M_-$ where $M_+$ is evenly finite-dimensional and
   $M_-$ is oddly finite-dimensional.
According to Kimura's conjecture in \cite{Ki} all motives should be finite dimensional. The conjecture is known to hold  for curves, rational surfaces, surfaces with $p_g(X)=0$, which are not of general type, abelian varieties and  some 3-folds. If  $ d= \dim X \le 3 $,  then the finite dimensionality of $h(X)$ is a birational invariant (see \cite[Lemma 7.1]{GG}), the reason being that in order  to make regular a birational map $X \to Y$  between smooth projective 3-folds one needs to blow up only  points and curves, whose motives are finite dimensional. \par
 If  $X$ is a complex Fano threefold, then  $h(X)$ is finite dimensional  and  of abelian type, i.e. it lies in  the subcategory of $\sM_{rat}(\C)$ generated by the motives of abelian varieties, see \cite[Thm. 5.1]{GG}. The proof is based on the fact that all the Chow groups $A_i(X)_{alg}$ of algebraically trivial cycles are representable. More generally, if 
$M \in \sM_{rat}(\C)$ is a motive such that $A_i(M)_{alg}$ is representable, for all $i \ge 0$, then  $M$ is finite dimensional of abelian type, see \cite{Vial 2}. \par
\noindent In particular, if $X$ is a cubic threefold in $\P^4_{\C}$, then    $h(X)$ has the following Chow-K\"unneth decomposition

$$ h(X) =\un \oplus   \L  \oplus  N \oplus \L^2   \oplus \L^3 .$$

Here   $N =h_1(J) \otimes \L= h_1(J)(1)$, with $J$   an abelian variety, isogenous to  the intermediate Jacobian $ J^2(X)$. 
Let $l$ be a general line on $X$ . Blowing up $l$ we get a conic bundle  $\tilde X \to \P^2$ whose discriminant  curve 
$C_l$ is degree 5. Let  $\pi: \tilde C_l \to C_l$ be the double cover parametrizing irreducible components of singular conics. Then we have 

$$ J^2(X) \simeq \Prym(\tilde C_l/C_l ).$$

where  $\Prym(\tilde C_l/C_l)$ is the Prym variety of $\tilde C $ over $C$, i.e.the identity component  of the fixed locus of the involution $\tau = -\sigma$ on the Jacobian variety $\Jac (\tilde C)$. Here  $\sigma$ is  the involution on $ \tilde C$ induced by the double cover. The {\it Prym motive} associated  to an \'etale  double cover  of curves $\tilde C \to C$ is the motive $(\tilde C,\pi)$, where $\pi$ is the correspondence $\pi =(id -\sigma)/2\in A^1 (\tilde C \times \tilde C)$, see \cite{NS}. Then 
$$\Delta_{\tilde C} =\pi + (\Delta_{\tilde C}+\sigma)/2 \in A^1( \tilde C \times \tilde C)$$

If  $h(\tilde C) =\un \oplus h_1(\tilde C) \oplus \L$ is  a C-K decomposition there is an isomorphism
$$\phi_{\tilde C} : A^1(\tilde C \times \tilde C)/\sI(\tilde C) \simeq \End_{\sM_{rat}}(h_1(\tilde C)) \simeq \End_{Ab}(\Jac \tilde C)\otimes \Q.$$

Here $\sI(\tilde C)$ is the ideal of degenerate correspondence, that is generated by  $[\tilde C\times P] $ and $ [P \times \tilde C]$, with $P$ a closed point and 
$ \End_{Ab}(\Jac \tilde C)$ is the group of endomorphisms as an Abelian variety, see \cite[7.4.4]{KMP}. Therefore, under the map $\phi_{\tilde C}$, the Prym motive  $\Prym(\tilde C/ C)$ corresponds to  the submotive $h_1(\tilde C)^{-}$ of $h_1( \tilde C)$ where the involution $\sigma$ acts  as -1. \par
\noindent  As  proved by Clemens and Griffiths,  $X$ is not rational, because $J^2(X)$ is not the Jacobian  variety of a curve $D$ and hence the Prym motive is not isomorphic to the mid-motive $h_1(D)(1)$.
\medskip

Let $X$ be  a cubic fourfold in $\P^5_{\C}$. In Section 2, we show that the motive $h(X)$ has a reduced  Chow-K\"unneth decomposition as follows  

$$h(X) =\un \oplus  \L  \oplus (\L^2)^{\oplus \rho_2}   \oplus  t(X)  \oplus \L^3   \oplus \L^4 $$

where $\rho_2 $ is the rank of $A^2(X)$ and all the summands of $h(X)$, but possibly $t(X)$, are finite dimensional, see  (\ref{CK}).  The motive $t(X)$ is the {\it transcendental motive } of $X$ and 
\begin{equation}A_1(X)_{hom} =A_1(X)_{alg}=A_1(t(X))\end{equation}
\noindent If $l$ is a general line on  $X$ then the surface $S_l \subset F(X)$ of  lines meeting $l$ is  smooth, with $q(S_l)=0$, and geometric genus  $p_g(S_l)=5$ ( see \cite[Sect. 3]{Vois 1}). Let $\pi: \tilde X \to \P^3$ be the conic bundle obtained by blowing up $X$ along the line $l$. The surface $S_l$ parametrizes irreducible components of the conics fibers of $\pi$. There is  an involution $\sigma$  on $S_l$ with 16 isolated fixed points and the quotient $Y_l=S_l/\sigma$ is a quintic surface in $\P^3$. The involution $\sigma$ induces a double cover $S_l \to Y_l$. Similarly to the case of the Prym motive associated to a conic bundle  one can define the Prym motive  

$$\Pr(S_l,\sigma) := t_2(S_l)^{-},$$

where $t_2(S_l)^{-}$ is the direct summand of the transcendental motive $t_2(S_l)$ where $\sigma$ acts as -1. In Prop. \ref{summand} we prove that 

$$t_2(S_l)^{-}(1) \simeq t(X)$$

Therefore, similarly to the case of a cubic  3-fold, it is natural to ask if there is  a surface $Z$ such that the Prym motive $\Pr(S_l,\sigma)$ is isomorphic to the (twisted) transcendental motive of $Z$ and hence $t(X) \simeq t_2(Z)(1)$. If $X$ is very general and therefore conjecturally not  rational, then this cannot happen, see Prop. \ref{superfici} (i). On the other hand if $X$ is special, then, assuming the Hodge conjecture and Kimura's conjecture, there exists a K3 surface $S$ such that $t_2(S)(1) \simeq t(X)$, see Remark 3.7\par
In Sect. 2 we relate the transcendental motive of $X$ with the motive of its Fano variety of lines $F(X)$ ($F$ for short).

\begin{thm}\label{1.2}
Let $h(F)$ be the motive of $F(X)$, endowed with a Chow-K\"unneth decomposition, and let $h_2(F) \cong h_2^{alg}\oplus h_2^{tr}$ be the standard decomposition of $h_2(F)$. Then we have an isomorphism
$$h_2^{tr}(F)(1)\cong h_4^{tr}(X) = t(X).$$
and therefore
 $$\Mot(X) =\Mot(F),$$
where, for a smooth projective variety $Y$ we denote by $\Mot(Y)$ the full pseudo-abelian tensor subcategory of $\sM_{rat}(\C)$ generated by $h(Y)$ and the Lefschetz motive $\L$. 
\end{thm}

 \noindent In some cases, we can say even more (see Sect. 3).
\begin{thm}\label{1.3}
Suppose $F(X)\cong S^{[2]}$, with $S$ a K3 surface. Then there is an isomorphism of motives

$$t_2(S)(1)\cong t(X).$$
and hence
$$\Mot(X) =\Mot(F)= \Mot(S)$$
\end{thm}

In particular $X$ has a finite dimensional motive if and only if the motive of $S$ is finite dimensional, in which case the transcendental motives of $X$ and $S$ are both indecomposable. Note that, according to  Kimura's conjecture and a conjecture by Y. Andr\'e (see \cite{An}),  the motives of a  cubic fourfold and of a K3 surface should be  of abelian type. \par 

\smallskip

\textbf{Added in proof:} Some time after a first version of this paper appeared, T-H B\"ulles in \cite{Bull} has given a different proof of Thm.\ref{1.2} and Thm. \ref{1.3}. His results also show that the isomorphism in Thm. 1.3  holds even when $F(X)$ is just birational to $S^{[2]}$, see \cite[Prop. 1.4]{Bull}.

\smallskip

Let  $\sC_d$ be the Noether-Lefschetz divisor of special cubic fourfold of discriminant $d$, as defined by B.Hasset in \cite{Has 1}. If $d$ satisfies the condition \par

\smallskip

(**)  $d$ is not divisible by 4,9 or a prime $p \equiv 2 (3)$ \

\smallskip

and $X$ is a general member of $\sC_d$ then, according to a conjecture of Kuznetsov \cite{Kuz} and results of Addington-Thomas \cite{AT}, $X$ should be rational. In Sect. 4 we prove that, assuming Kimura's conjecture, if  $X \in \sC_d$, with $d$ satisfying (**), then there is an isomorphism $t_2(S)(1) \simeq t(X)$, where $S$ is a K3 surface.\par
 \noindent In Section 5, by adapting some  results by Vial \cite{Vial 1} about motives of fibrations with rational fibers, we showcase classes of cubic fourfolds with a K3 surface $S$ such that $t(X) \simeq t_2(S)(1)$. More precisely we consider special cubic fourfolds that are genera members either of  $\sC_8$ or of  $\sC_{18}$. In the first case the fibers are quadrics, in the second case del Pezzo surfaces of degree 6.\par
\noindent In Sect.6 we consider the case of a cubic fourfold $X$, with an  involution, and prove (see Prop. 6.5) that  there is an isomorphism $t(X) \simeq t_2(S)(1)$, where $S \subset F(X)$ is a  K3 surface.

\section {The motive of a cubic fourfold} 

In this section we give a Chow-K\"unneth decomposition of the motive $h(X)$ of a cubic fourfold and show that its   transcendental part $h^{tr}_4(X)=t(X)$ is isomorphic to the (twisted) transcendental motive $h^{tr}_2(F(X))(1)$ coming from  a suitable Chow-K\"unneth decomposition of the motive of the Fano variety of lines $F(X)$ (see Thm. \ref{fano}). Note that, by a result of R.Laterveer \cite{Lat 1}, if $h(X)$ is finite dimensional then also $h(F(X))$ is finite dimensional.
\medskip
\noindent  Then we show that  
$$A_1(X)_{hom}=A_1(X)_{alg} \simeq A_1(t(X)).$$

\medskip

 Every cubic fourfold $X$ is rationally connected and hence $CH_0(X) \simeq \Z$. Rational, algebraic and homological equivalences all coincide for cycles of codimension 2 on $X$. Hence the cycle map 
$CH^2(X) \to H^4(X,\Z)$ is  injective and $A^2(X)=CH^2(X) \otimes \Q$ is a vector subspace of dimension $\rho_2(X)$ of $H^4(X,\Q)$.  By  the results in \cite{TZ} we  have  $A_1(X)_{hom} = A_1(X)_{alg}$. Moreover homological equivalence and numerical equivalence coincide for algebraic cycles on $X$, because the standard  conjecture $D(X)$ holds true. Therefore $A_1(X)_{hom}=A_1(X)_{num}$.\par
 \noindent A cubic fourfold  $X$ has no odd cohomology and  $H^2(X,\Q) \simeq NS(X)_{\Q} \simeq A^1(X)$, because  $H^1(X ,\sO_X) =H^2(X,\sO_X)=0$. Let $\gamma\in A^1(X)$  be the class of  a hyperplane section. Then $H^2(X,\Q)= A^1(X) \simeq \Q \gamma$ and  $H^6(X,\Q)= \Q[\gamma^3/3]$. Here   
 $< \gamma^2,\gamma^2>= \gamma^4 =3$, where   $ < \ , \ >$ is the intersection form on $H^4(X,\Q)$.\par
\noindent Let  $\pi_0 =[X \times P_0], \pi_8 = [P_0 \times X]$, where $P_0$ is a closed point and 
$\pi_2 =(1/3) (\gamma^3  \times \gamma)$, $\pi_6 =\pi^t_2 = (1/3)( \gamma \times \gamma^3)$. Then 
$$h(X) \simeq \un \oplus h_2(X) \oplus h_4(X) \oplus h_6(X) \oplus \L^4$$
where  $\un \simeq(X, \pi_0)$, $\L^4 \simeq (X, \pi_8)$, $h_2(X)=(X,\pi_2)$, $h_6(X) = (X,\pi_6)$ and $h_4(X) =(X,\pi_4)$, with $\pi_4 =\Delta_X -\pi_0-  \pi_2 -\pi_6 -\pi_8$. The above decomposition of the motive $h(X)$ is in fact integral, because
$$\gamma^3 =3\vert l \vert$$
for a line $l \in F(X)$, see \cite[Lemma A3]{SV 2}.\par
 \noindent Let  $\rho_2$ be the dimension of $A^2(X)$, i.e. the rank of the algebraic part in $H^4(X)$. Choosing 2-cycles  $\{D_1, D_2\dots,D_{\rho_2} \}$  and their Poincar\'e dual cycles $\{D'_1, D'_2\dots,D' _{\rho_2} \}$ we get a splitting  
 $$h_4(X) =h^{alg}_4(X) \oplus h^{tr}_4(X)$$

 where $\pi^{alg}_4=\sum_{1\le i  \le \rho_2}[D_i,D'_i]$ and $h^{alg}_4(X) \simeq (\L^2)^{\rho_2}$. Therefore we  get a {\it refined Chow-K\"unneth decomposition } of the motive $h(X)$

  \begin {equation}\label{CK} h(X) =  \un \oplus \L \oplus (\L^2)^{\oplus \rho_2} \oplus  t(X) \oplus \L^3 \oplus  \L^4. \end{equation}

\noindent Here $t(X) =h^{tr}_4(X)=(X,\pi^{tr}_4)$  with $\pi^{tr}_4 = \Delta_X - \pi_0-\pi_2- \pi^{alg}_4-\pi_6-\pi_8$. Then $H^*(t(X)) =H^4(t(X)) =T(X)_{\Q}$ where $T(X)$ is the transcendental lattice. All the motives in (\ref{CK}), different from $t(X)$, are isomorphic to  a multiple of $\L^i$, for some $i$.  Therefore  in  the decomposition  (\ref{CK}) all motives, but possibly $t(X)$, are finite dimensional. It follows that   the motive $h(X)$ is finite dimensional if and only if $t(X)$ is  evenly finite dimensional.\par 

\begin {lm} \label{chowtra}
 Let $X$ be a cubic fourfold and let $t(X)$ be the transcendental motive in the Chow-K\"unneth decomposition (\ref{CK}). Then $A^i(t(X)) = 0$ for $i \ne 3$ and $A^3(t(X))=A_1(X)_{hom}$
\end{lm}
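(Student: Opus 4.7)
The plan is to pass the Chow--K\"unneth decomposition (\ref{CK}) through Chow groups: the six projectors $\pi_0,\pi_2,\pi^{alg}_4,\pi^{tr}_4,\pi_6,\pi_8$ are mutually orthogonal idempotents summing to $\Delta_X$, so for every $i$ they split
$$A^i(X) = A^i(\un) \oplus A^i(h_2(X)) \oplus A^i(h^{alg}_4(X)) \oplus A^i(t(X)) \oplus A^i(h_6(X)) \oplus A^i(\L^4).$$
Identifying $A^i(t(X))$ thus reduces to computing the Chow groups of the other five summands and comparing with what is already known about $A^*(X)$.

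Every summand other than $t(X)$ is a direct sum of Tate twists of $\un$, and for a Lefschetz motive one has $A^j(\L^k)=\Q$ if $j=k$ and zero otherwise. From this I read off: $\un$ contributes $\Q$ to $A^0$; $h_2(X)\simeq\L$ contributes $\Q\gamma$ to $A^1$; $h^{alg}_4(X)\simeq(\L^2)^{\oplus\rho_2}$ contributes $\Q^{\rho_2}$ to $A^2$, generated by the chosen basis $D_1,\dots,D_{\rho_2}$; $h_6(X)\simeq\L^3$ contributes a one-dimensional space to $A^3$; and $\L^4$ contributes $\Q$ to $A^4$. Since $A^0(X)=\Q$, $A^1(X)=\Q\gamma$, $A^2(X)=\Q^{\rho_2}$ by definition of $\rho_2$, and $A^4(X)=\Q$ by rational connectedness, these Tate summands already exhaust $A^i(X)$ in codimensions $0,1,2,4$; this forces $A^i(t(X))=0$ for $i\ne 3$.

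It remains to analyze $A^3(t(X))$. Using $\pi_6=(1/3)(\gamma\times\gamma^3)$, a direct pushforward computation shows that $\pi_6$ acts on a class $\alpha\in A^3(X)$ by $\alpha\mapsto \deg(\alpha\cdot\gamma)\,[\gamma^3/3]=\deg(\alpha\cdot\gamma)\,[l]$, where I use the identity $\gamma^3=3[l]$ for the class of a line $l\subset X$. Hence the image $A^3(h_6(X))$ is the one-dimensional subspace $\Q[l]$, and the kernel of $\pi_6$ on $A^3(X)$ consists of the one-cycles with vanishing intersection number against $\gamma$. Since $H^6(X,\Q)=\Q[l]$ and the cycle class map $A^3(X)\to H^6(X,\Q)$ is $\alpha\mapsto \deg(\alpha\cdot\gamma)\,[l]$, this kernel is precisely $A_1(X)_{hom}$. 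The decomposition above then yields $A^3(t(X))=A_1(X)_{hom}$.

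The only real content is the last paragraph: one must check that the projector $\pi_6$ isolates exactly the line class in $A^3(X)$ and that the complementary subspace agrees with the homologically trivial one-cycles. Once this is established, the rest is bookkeeping that follows from the fact that the non-transcendental summands of (\ref{CK}) are Lefschetz motives whose Chow groups are concentrated in a single codimension.
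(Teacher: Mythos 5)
Your argument is correct and follows essentially the same route as the paper: both proofs push the Chow--K\"unneth decomposition (\ref{CK}) through Chow groups, use $A^0(X)=A^4(X)=\Q$, $A^1(X)=\Q\gamma$ and $A^2(X)_{hom}=0$ to kill $A^i(t(X))$ for $i\ne 3$, and then analyze the action of $\pi_6=(1/3)(\gamma\times\gamma^3)$ on $A^3(X)$. The only cosmetic difference is at the last step: you identify $\ker(\pi_6|_{A^3(X)})$ with $A_1(X)_{hom}$ directly via Poincar\'e duality (since $H^6(X,\Q)$ is one-dimensional and paired against $\Q\gamma$, homological and numerical triviality of $1$-cycles coincide for free), whereas the paper shows $\pi^{tr}_4(\beta)$ is numerically trivial and then invokes $A_1(X)_{num}=A_1(X)_{hom}$; these amount to the same computation.
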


\begin{proof} The cubic fourfold $X$ is rationally connected and hence $A^4(X)=A_0(X)=\Q$ that implies $A^4(t(X))=0 $. Also from the Chow-K\"unneth decomposition in (\ref{CK}) we get $A^0(t(X)) =0$ and  
$A^1(t(X))= \pi^{tr}_4 A^1(X)=0$, because $A^1(h_2(X))= \pi_2(A^1(X)) =A^1(X)$. Here and in the following we will denote by $\pi_i A^j(X)$the action of a correspondence $\pi_i$ on the Chow groups.\par
\noindent We first  show that $A^2(t(X))=0$. Let $\alpha \in A^2(X)$, with $\alpha \ne 0$. Then $\alpha$ is not homologically trivial, because $A^2(X)_{hom}=0$.
$$\pi^{tr}_4 (\alpha) = \alpha -  \pi_0(\alpha) -\pi_2(\alpha) -\pi^{alg}_4(\alpha) - \pi_6(\alpha) - \pi_8(\alpha),$$
where $\pi_0(\alpha) = \pi_8(\alpha)=0$. We also have
$$ \pi_2(\alpha) = (1/3)[\gamma^3 \times \gamma]_*(\alpha)= (1/3)(p_2)_*((\alpha \times X)\cdot [\gamma^3 \times \gamma])$$
where $p_2 :X \times X \to X$ and $\gamma^3 \in A^3(X)$. Therefore $\pi_2(\alpha) =0$ in $A^2(X)$. Similarly 
$$\pi_6(\alpha) =  (1/3)[\gamma \times \gamma^3]_*(\alpha)= (1/3)(p_2)_*((\alpha \times X) \cdot  [\gamma \times \gamma^3]) $$
where $\alpha \cdot \gamma \in A_1(X)/A_1(X)_{hom} \simeq \Q[\gamma^3/3]$ and hence $\alpha \cdot \gamma= (a/3) [\gamma^3]$ with $a \in \Q$. Therefore
 $\pi_6(\alpha) =0$ in $A^2(X)$. Let $\{D_1,\dots, D_{\rho_2}\}$ be  a $\Q$-basis for $ A^2(X)$ and let $\alpha =\sum_{ 1\le i \le \rho_2} m_i D_i$, with $m_i \in \Q$.
Then $\pi^{alg}_4(\alpha) = \sum_{1 \le i \le \rho_2} \pi_{4,i}(\alpha)= \alpha$, because $(\pi_{4,i})_*(D_i)=  D_i $. We get $\pi^{tr}_4 (\alpha)  =\alpha -  \pi^{alg}_4 (\alpha) =0 $ and hence
$$A^2(t(X))= (\pi^{tr}_4)_*A^2(X)=0.$$
\noindent Therefore we are left to show that $A_1(t(X))=A_1(X)_{hom}$. Let $\beta \in A_1(X) =A^3(X)$. From the Chow-K\"unneth decomposition in (\ref{CK}) we get
$$\pi^{tr}_4 (\beta) = \beta -  \pi_0(\beta) -\pi_2(\beta) -\pi^{alg}_4(\beta) - \pi_6(\beta) - \pi_8(\beta),$$
where $\pi_0(\beta) = \pi^{alg}_4(\beta) =\pi_8(\beta)=0$. We also have $\pi_2(\beta) = 0$ because $ \pi_2 =(1/3)(\gamma^3 \times \gamma)$. Therefore
$$\pi^{tr}_4(\beta) =\beta - \pi_6(\beta) = \beta -(1/3)(\gamma \times\gamma^3)_*(\beta) = \beta -(1/3) (\beta \cdot \gamma)\gamma^3 \in A^3(X) $$
and hence 
$$(\pi^{tr}_4(\beta)\cdot \gamma) = (\beta \cdot \gamma) -  (1/3) (\beta \cdot  \gamma)(\gamma^3\cdot \gamma)=0$$ because $\gamma^4 =3$.
 Since $\gamma$ is a generator of $A^1(X)$ it follows that  the cycle   $\pi^{tr}_4(\beta)$ is  numerically trivial. Therefore we get
$$A_1(t(X))=\pi^{tr}_4A_1(X)=A_1(X)_{num}=A_1(X)_{hom}.$$
\end{proof}

The following Lemma follows from the results in \cite[Thm. 3.18]{Vial 1} and \cite[Lemma 1]{GG}.

\begin{lm}\label{morphism} Let $f : M \to N$ be a morphism of motives in $\sM_{rat}(\C)$ such that $f_*:  A^i(M) \to A^i(N)$ is an isomorphism for all $i \ge0$. Then $f $ is an isomorphism.\end{lm}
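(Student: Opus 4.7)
The plan is to invoke Manin's identity principle, a Yoneda-type statement for the category of motives: a morphism $f : M \to N$ is an isomorphism in $\sM_{rat}(\C)$ if and only if for every smooth projective variety $T$ over $\C$, the induced map
\[
f_* : \Hom_{\sM_{rat}}(h(T), M) \to \Hom_{\sM_{rat}}(h(T), N)
\]
is a bijection. Once this is established, lifting $\text{id}_N \in \Hom(N, N)$ along the bijection with $P = N$ produces $g : N \to M$ with $f \circ g = \text{id}_N$, and applying $f_*$ to $g \circ f$ forces $g \circ f = \text{id}_M$ as well; hence a single Yoneda step delivers a two-sided inverse.

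Writing $M = (X, p, m)$ and $N = (Y, q, n)$, the Hom groups above are subquotients of the Chow groups $A^*(T \times X)$ and $A^*(T \times Y)$, cut out by the idempotents $p$ and $q$. Passing to the colimit over non-empty Zariski opens $U \subseteq T$ identifies them with Chow groups of the base-changed motives $M_K$ and $N_K$, where $K = \overline{\C(T)}$. Consequently, the identity principle reduces the lemma to verifying that the base-changed morphism induces isomorphisms $f_{K,*} : A^i(M_K) \to A^i(N_K)$ for every $K$ of this shape.

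The substantive step is therefore to upgrade the hypothesis, which is stated only for Chow groups over $\C$, to an analogous statement after base change to such $K$. This is handled by a spreading-out argument: the correspondence representing $f$ descends to a countable subfield $k \subset \C$, and by \cite[Lemma 2.1]{Vial 4} (already used in the proof of Theorem \ref{famiglia}) a suitable embedding $\overline{\C(T)} \hookrightarrow \C$ identifies the relevant Chow groups of the generic fibre with Chow groups of a very general $\C$-fibre of a spread of $M$ and $N$. Under this identification $f_{K,*}$ becomes $f_*$ over $\C$, which is an isomorphism by hypothesis.

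The main obstacle is precisely this base-change compatibility, because Chow groups do not in general commute with arbitrary field extension. Once it is in place the identity principle concludes the proof, and the resulting statement reproduces \cite[Thm. 3.18]{Vial 4} and \cite[Lemma 1]{GG}.
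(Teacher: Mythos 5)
Your overall architecture (Yoneda/Manin identity principle plus spreading out to function fields) is the right skeleton, and the spreading-out discussion correctly identifies where the universal-domain hypothesis enters. But there is a genuine gap at the central step. The claim that passing to the colimit over non-empty opens $U\subseteq T$ ``identifies'' $\Hom_{\sM_{rat}}(h(T),M)$ with $A^*(M_K)$, $K=\overline{\C(T)}$, is false: the localization sequence only gives a natural \emph{surjection} from (the relevant summand of) $A^*(T\times X)$ onto $\colim_U A^*(U\times X)\simeq A^*(M_{k(T)})$, with kernel generated by cycles supported over proper closed subsets of $T$. Controlling that kernel requires an induction on $\dim T$ (the Bloch--Srinivas d\'evissage), and this d\'evissage is exactly the content of \cite[Thm. 3.18]{Vial 4}; crucially, it only delivers \emph{surjectivity} of $\Hom(h(T),M)\to\Hom(h(T),N)$, hence a right inverse $g$ with $f\circ g=\mathrm{id}_N$. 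It does not give injectivity, because for injectivity the induction does not close up: classes in the kernel of $f_*$ could come from cycles supported over divisors of $T$, and the filtration argument produces extension problems rather than vanishing. Consequently your final step --- ``applying $f_*$ to $g\circ f$ forces $g\circ f=\mathrm{id}_M$'' --- is unjustified, since it needs injectivity of post-composition with $f$ on $\Hom(h(X)(m),-)$, which your reduction has not established.

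The paper closes this gap by a different, and necessary, final move: from $f\circ g=\mathrm{id}_N$ the endomorphism $g\circ f$ of $M$ is idempotent, so $M\simeq N\oplus T$ with $T$ the complementary summand; since $f$ vanishes on $T$ and $f_*$ is injective on $A^i(M)$ by hypothesis, one gets $A^i(T)=0$ for all $i$, and then $T=0$ by \cite[Lemma 1]{GG} (itself a d\'evissage statement: a motive with vanishing Chow groups over a universal domain is zero). You need this complement trick, or an equivalent nilpotence argument, to upgrade the split surjection to an isomorphism; the ``single Yoneda step'' as you describe it does not suffice.
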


\begin{proof} Let $M=(X,p,m)$  and  $N =(Y,q,n)$ and let  $k \subset \C$ be a field of definition of  $f$, which is finitely generated . Then $\Omega=\C$ is a universal domain over $k$. By \cite[Thm. 3.18]{Vial 1} the map $f$ has a right inverse, because the map $f_*: A^i(M)\to A^i(N)$ is surjective.  Let  $g : N \to M$ be such that $f \circ g =id_N$.  Then $g$ has an image $T$ which  is a direct factor of $M$ and hence $f$ induces an isomorphism  of motives in $\sM_{rat}(\C)$ 
$$f :  M \simeq N \oplus T$$
From the isomorphism  $A^i(M)\simeq A^i(N)$, for all  $i \ge 0$, we get $A^i(T)=0$ and hence $T=0$, by \cite[Lemma 1]{GG}.\end{proof} 
Let $X$ be a cubic fourfold and let $F(X)=F $ be its Fano variety of lines , which is a smooth fourfold. Let
\begin {equation}\label{incidence}\CD  P@>{q}>> X \\
@V{p}VV   @.         \\
F
\endCD \end{equation}
be the incidence diagram, where $P \subset X \times F$ is the universal line over $X$.    Let   $  p_*q^* : H^4(X,\Z) \to H^2(F ,\Z)$ be the Abel-Jacobi map.  Let $\alpha_1,\dots ,\alpha_{23}$ be a basis of    $H^4(X,\Z)$ and  let  $\tilde \alpha_i =p_*q^*(\alpha_i)$.  Then, by a result of Beauville-Donagi in \cite{BD},  $\tilde \alpha_i =p_*q^*(\alpha_i)$ form a basis of $H^2(F,\Z)$.  The lattice $H^2(F,\Z)$ is endowed with the   Beauville-Bogomolov bilinear form $q_F$, see \cite[Sect. 19]{SV 2}. The Abel-Jacobi map  induces an isomorphism between the primitive cohomology of  $H^4(X,\Z)_{prim}$ and the primitive cohomology $H^2(F,\Z)_{prim}$. Here $H^2(F,\Z)_{prim} =<g>^{\perp}$, with $g \in H^2(F,\Z)$ the restriction to $F(X) \subset \Gr (2,6)$ of the class on $\Gr(2,6)$ defining the Pl\"ucker embedding. In particular $g =p_*q^*(\gamma^2)$. The Abel-Jacobi map induces an isomorphism between the Hodge structure of $H^4(X,\C)_{prim}$ and the (shifted) Hodge structure of $H^2(F,\C)_{prim}$.\par
\noindent The  next result shows that the Abel-Jacobi map induces an isomorphism between  $t(X)$  and the transcendental motive $h^{tr}_2(F)$ in a suitable Chow-K\"unneth decomposition for $h(F)$.

\begin{thm}\label{fano} Let $X$ be cubic fourfold and let $F(X)$ be  its Fano variety  of lines. Then there exists a Chow-K\"unneth decomposition 
$$h(F) =h_0(F) \oplus h_2(F) \oplus h_4(F) \oplus h_6(F) \oplus h_8(F)$$
with  $h_2(F) \simeq h^{alg}_2(F) \oplus h^{tr}_2(F)$. The Abel-Jacobi map gives an isomorphism  
$$h^{tr}_2(F) (1)\simeq h^{tr}_4(X) =t(X).$$ \end{thm}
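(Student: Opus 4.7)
The plan is to use the universal line $P \subset X\times F$ --- whose class $[P]\in A^3(X\times F)$ represents a morphism $[P]: h(X) \to h(F)(1)$ in $\sM_{rat}(\C)$ realizing the Beauville-Donagi Abel-Jacobi map $p_*q^*$ on cohomology --- together with a cohomological inverse, to construct both the Chow-K\"unneth decomposition of $F$ and the desired motivic isomorphism in parallel.

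First I would set up the ``algebraic'' pieces of the decomposition of $h(F)$ following the same recipe as (\ref{CK}): $\pi_0(F) = [F\times\mathrm{pt}]$, $\pi_8(F) = (\pi_0(F))^t$, and $\pi^{alg}_2(F),\pi^{alg}_6(F)$ built from a $\Q$-basis of $NS(F)_\Q$ together with hard-Lefschetz duals in $A^3(F)$, yielding Lefschetz summands of rank $\rho(F)$. The crucial ingredient is then a cycle $\Gamma\in A^5(F\times X)$ --- obtained by combining $[P]^t$ with an appropriate $g$-multiplication or self-intersection factor, lifting the cohomological inverse of Abel-Jacobi (the naked transpose $[P]^t$ is of the wrong codimension for this purpose) --- together with a constant $c\in\Q^*$ satisfying the motivic Beauville-Donagi identity
$$\pi^{tr}_4(X)\circ \Gamma \circ [P] \circ \pi^{tr}_4(X) \,=\, c\cdot \pi^{tr}_4(X) \qquad \text{in } A^4(X\times X).$$
Given this identity, one defines $\pi^{tr}_2(F) := \tfrac{1}{c}\,[P]\circ \pi^{tr}_4(X)\circ \Gamma \in A^4(F\times F)$, which is an idempotent orthogonal to the algebraic projectors (a short check using the identity), completes the decomposition with $\pi^{tr}_6(F) := (\pi^{tr}_2(F))^t$ and $\pi_4(F):= \Delta_F - (\text{sum of the rest})$, and sets $\phi := [P]\circ \pi^{tr}_4(X)$, $\psi := \tfrac{1}{c}\,\pi^{tr}_4(X)\circ \Gamma$; the identity immediately gives $\psi\circ\phi = \pi^{tr}_4(X) = \mathrm{id}_{t(X)}$ and $\phi\circ\psi = \pi^{tr}_2(F) = \mathrm{id}_{h^{tr}_2(F)(1)}$.

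The main obstacle is proving the motivic Beauville-Donagi identity at the level of rational equivalence. Cohomologically, it reduces to Beauville-Donagi's classical computation showing that Abel-Jacobi is an isomorphism on transcendental middle cohomology of $X$. Promoting it to an identity in $A^4(X\times X)$ requires first producing the correct $\Gamma$ explicitly and then computing the composition $\Gamma \circ [P]$ via intersection theory on the double incidence built from the $\P^1$-bundle $p: P\to F$, before cutting down on both sides by $\pi^{tr}_4(X)$ to eliminate the algebraic contributions. An alternative, more indirect, route is via Lemma \ref{morphism}: by Lemma \ref{chowtra} and an analogous projector-by-projector calculation on $F$, one reduces $\phi$ being an isomorphism to the single Chow-group check that $\phi_*: A_1(X)_{hom} \iso A^2(h^{tr}_2(F))$. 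Either way, the required cycle-theoretic input has been essentially established in Shen-Vial's detailed study of the motive of the Fano variety of lines of a cubic fourfold, from which the isomorphism can be read off.
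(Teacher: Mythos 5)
Your ``alternative, more indirect route'' is in fact the paper's proof, and your primary route is not carried out. The paper does not build the Chow--K\"unneth decomposition of $F$ out of the Abel--Jacobi correspondence at all: it imports wholesale the Fourier-compatible decomposition $\{\pi_0(F),\dots,\pi_8(F)\}$ with $\pi_2(F)=\pi_2^{alg}\oplus\pi_2^{tr}$ from Shen--Vial, so the first assertion of Theorem \ref{fano} is a citation, not a construction. For the isomorphism it then does exactly what you sketch in your last sentence: the $\P^1$-bundle $p:P\to F$ gives $h(P)\simeq h(F)\oplus h(F)(1)$, whence a single map $f:h^{tr}_2(F)(1)\to t(X)$ (no inverse correspondence is ever produced); one checks $A^i$ vanishes on both sides for $i\ne 3$ (Lemma \ref{chowtra} on the $X$ side) and that in degree $3$ both sides are identified with $A_1(X)_{hom}$, the $F$-side identification $A^2(h^{tr}_2(F))=A^2(F)_{hom}\simeq\sA_{hom}\simeq A_1(X)_{hom}$ coming from Shen--Vial's exact sequences for $I_*$ and the cylinder map $\Psi_0$; then Lemma \ref{morphism} upgrades the Chow-group isomorphism to a motivic one.

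The gap in your primary route is the one you yourself flag: the ``motivic Beauville--Donagi identity'' $\pi^{tr}_4(X)\circ\Gamma\circ[P]\circ\pi^{tr}_4(X)=c\cdot\pi^{tr}_4(X)$ in $A^4(X\times X)$ is asserted, not proven, and proving such an identity modulo rational (rather than homological) equivalence is essentially as hard as the theorem itself --- absent finite-dimensionality of $h(X)$, which is not assumed here, there is no general mechanism to lift the cohomological Beauville--Donagi computation to Chow groups. Lemma \ref{morphism} (surjectivity on Chow groups over a universal domain yields a right inverse, then a dimension count kills the complement) is precisely the device that makes an explicit two-sided inverse correspondence unnecessary, and it is why the paper's argument closes while your first plan stalls at its ``main obstacle.'' If you pursue the fallback route, note also that the degree-$3$ check alone does not suffice for Lemma \ref{morphism}: you must also verify $A^i(h^{tr}_2(F))=0$ for $i\ne 2$, which again is extracted from the Shen--Vial decomposition rather than from a projector-by-projector computation of your own.
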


\begin {proof} The hyperk\"ahler manifold $F(X)$ is of $K3^2$-type, \it i.e. \rm it is deformation equivalent to the Hilbert scheme of length-2 subschemes on a K3 surface. By the results in \cite[Sect. 19]{SV 2}, there exists a cycle $L \in \CH^2(F \times F)$ whose cohomology class in $H^4(F \times F,\Q)$ is the Beauville-Bogomolov class 
$\sB$, i.e. the class corresponding to $q^{-1}_F$.  Let us set $l := (i_{\Delta})^*L \in CH^2(F)$, where $i_{\Delta} : F \to F \times F$ is the diagonal embedding. By \cite[Thm. 2]{SV 2}, the Chow groups of the variety $F$ have a {\it Fourier decomposition}. In particular the group   $A^4(F) =A_0(F)$ has a  canonical decomposition  

$$A^4(F) = A^4(F)_0 \oplus A^4(F)_2 \oplus A^4(F)_4$$

with  $A^4(F)_0 = <l^2>$, $ A^4(F)_2 = l \cdot L_*A^4(F)$  and $ A^4(F)_4 = L_*A^4(F)   \cdot  L_*A^4(F)$.  Here   $<l^2> = \Q c_F$, with  $c_F$  a special degree 1 cycle coming from a surface $W \subset F$ such that any two points on $W$ are rationally equivalent on $F$, see \cite[Lemma A.3]{SV 2}.\par
 The Fourier decomposition of the Chow groups $A^*(F)$ is compatible with  a Chow-K\"unneth decomposition of the motive $h(F)$ given by projectors 
 $$\{\pi_0(F),\pi_2(F),\pi_4(F),\pi_6(F),\pi_8(F)\},$$
as in \cite[Thm. 8.4]{SV 1}. Here $\pi_2(F) =\pi^{alg}_2 \oplus \pi^{tr}_2, \pi_6(F) =\pi^{alg}_6 \oplus \pi^{tr}_6$ and 
$$\pi_4 =\Delta_F - (\pi_0 -\pi_2 -\pi_6 -\pi_8)$$
\noindent  We have $h(F) = M \oplus N$ where 
$$N= (F,\pi_0) \oplus (F,\pi^{alg}_2) \oplus (F,\pi^{alg}_4) \oplus (F,\pi^{alg}_6) \oplus (F,\pi_8)$$
and $N$ is isomorphic to a direct sum of $\L^i$, for $i \ge 0$. We also have  $A^*(F)_{hom}=A^*(M)$. Let us set $h_2(F) =h^{alg}_2(F) \oplus h^{tr}_2(F)$,  where $h^{tr}_2(F) =(F,\pi^{tr}_2(F))$, with   $\pi^{tr}_2(F) \in \End_{\sM_{rat}}M$ and $H^*(h^{tr}_2(F)) =H^2_{tr}(F)$. Then
$$A^2(F) = \Im(\pi_4)_* \oplus \Im (\pi_2)_*=  \Im(\pi_4)_* \oplus  \Im(\pi^{tr}_2)_* $$
\noindent because $\pi^{alg}_2(F)$ acts as 0 on $A^2(F)$.\par
\noindent   Let us denote $\sA = I_*A^4(F) \subset A^2(F)$, with $I$ the incidence correspondence, i.e. $I =(p\times p)_*(q\times q)^*\Delta_X$ . The group $\sA_{hom}$ is generated by the classes $[S_{l_1}]- [S_{l_2}]$ where, for a line $l$ on $X$, $S_l$ denotes the surface in $F(X)$ of all lines meeting $l$, see \cite[Thm. 21.9]{SV 2}. By \cite[21.10]{SV 2} the group $\sA_{hom}$ coincides with the subgroup $ A^2(F)_2$ in the Fourier decomposition $A^2(F) =A^2(F)_0 \oplus A^2(F)_2$.\par
\noindent  The Abel-Jacobi map $q_*p^*: A^i(F) \to A^{i-1}(X)$ induces a surjective map $\Psi_0 : A^4(F) \to A^3(X) =A_1(X)$,  where $A_1(X)$ is generated by the classes of lines, see \cite{TZ}. The map induced by $\Psi_0$ on the subgroup $A^4(F)_{hom}$ has a kernel isomorphic to $F^4A^4(F)=\sA_{hom} \otimes  \sA_{hom}$, see \cite[Thm 20.2]{SV 2}, where  $F^4A^4(F) =\Ker \{ I_* : A^4(F) \to A^2(F)\}$. The maps $I_*$ and $\Psi_0$ yield two exact sequences

$$\CD   0@>>>  F^4A^4(F)@>>> A^4(F)_{hom}@>{I_*}>>\sA_{hom}@>>> 0 \endCD $$

$$\CD   0@>>>  F^4A^4(F)@>>> A^4(F)_{hom}@>{\Psi_0}>>A_1(X)_{hom}@>>> 0 \endCD $$
where $ (A^4(F))_{hom} = (A^4(F)_2)_{hom} \oplus  (A^4(F)_4)_{hom}$, with   $(A^4(F)_2)_{hom} \simeq \sA_{hom}$ and $(A^4(F)_4)_{hom} \simeq \sA_{hom} \cdot \sA_{hom}$. Therefore we get the following isomorphisms

$$ \sA_{hom}\simeq A^4(F)_2\simeq  A_1(X)_{hom} ;$$
 $$  \sA_{hom} \simeq A^2(F)_2  \simeq  A_1(X)_{hom}.$$
 
By \cite[Proposition 7.7]{SV 1} we also have 
$$A^2(F)_{hom} \simeq \sA_{hom} \Longleftrightarrow \Im (\pi_2)_* = A^2(F)_{hom}.$$
Therefore  $A^2(F)_{hom}=\Im (\pi^{tr}_2)$  and we get an isomorphism
$$A^2(h^{tr}_2(F)) \simeq A_1(X)_{hom}.$$
The universal line $P$, viewed as a correspondence in $A_5(F \times X)$, yields a map in $\sM_{rat}(\C)$ 
$$P_* : h(F)(1)\to h(X).$$
By the results in \cite{SV 2} the relation between the Chow groups of $F$ and $X$ is given  via $P$. Therefore, by composing with the projection $h(X) \to t(X)$ and the inclusion $h^{tr}_2(F)(1) \subset h(F)(1)$, the correspondence $P$ yields  a map of motives 
$$\bar P_* : h^{tr}_2(F)(1) \to t(X)$$

The above map induces a map of Chow groups
$$A^i(h^{tr}_2(F)(1))\to  A^i(t(X))$$
that is an isomorphism for all$i \ge0$ because 
$$A^3(h^{tr}_2(F)(1)) =A^2(h^{tr}_2(F))\simeq A_1(X)_{hom} = A^3(t(X))$$
and $A^i(h^{tr}_2(F)) =A^i(t(X)=0$ for $i \ne 3$. By Lemma \ref{morphism} we get $h^{tr}_2(F)(1)\simeq t(X)$.\end{proof}
\begin{rk}\label{finiteness} If the motive $h(F(X))$ is finite dimensional then, by Theorem \ref{fano}, also $t(X)$ is finite dimensional and hence $h(X)$ is finite dimensional. Conversely if $h(X)$ is finite dimensional then, by \cite{Lat 1}, also $h(F(X))$ is finite dimensional .\end{rk}
\medskip 
  Let $X$ be a cubic fourfold and  let $l \in F(X)$ be a general line. There exists a unique plane $P_l \subset \P^5$  containing $l$ and which is everywhere tangent to $X$ along $l$. 
Then
$$P_l \cdot X =2[l] + [l_0]$$
Let $\phi : F \dashrightarrow F$ be the rational map  defined by C.Voisin in \cite{Vois 2}. The map $\phi$ sends a general line $l \subset X$ to its residual line with respect  to the unique plane $\P^2 \subset \P^5$ tangent  to $X$ along $l$. Let $S_l \subset F$ be the surface of lines meeting a general line $l$. Then $S_l$ is a smooth surface  with $q(S_l)=0$ and   $p_g(S_l) =5$ ( see \cite[Lemma 1 and 3]{Vois 1}) and there is a natural involution $\sigma: S_l \to S_l$. If $[l'] \in S_l$ is  a point different from $[l]$ then $\sigma([l']) $ is the residue line of $l \cup l'$, while $\sigma ([l] )= [l_0]$. The involution $\sigma$ has 16 isolated fixed points and the quotient $Y_l =S_l/\sigma$ is a quintic surface in $\P^3$ with 16 ordinary double points, see \cite[Remark 4.4]{Shen}. Let $\tilde X$ be the blow-up of $X$ along $l$. Then the projection from the line $l$  defines a conic bundle  $\pi: \tilde X \to \P^3$. The surface $S_l$ parametrizes lines in the singular fibers,  the discriminant divisor $D \subset \P^3$ is the quintic surface $Y_l$ and the induced map $S_l \to D$ is the double cover $f_l: S_l \to Y_l$ associated to the involution $\sigma$.
The map $f_l :S_l \to Y_l$ induces a commutative diagram

$$\CD \tilde S_l@>{g_l}>>S_l \\
@V{\bar f_l}VV    @V{f_l}VV \\
\tilde Y_l@>>> Y_l\endCD $$

where $\tilde S_l$ is the blow-up of the set of isolated fixed points of $\sigma$ and $\tilde Y_l$ is a desingularization of $Y_l$. For a general $l$ the quintic surface $Y_l$ is normal with rational singularities, hence it has  $p_g(Y_l)=4$ and $q(Y_l)=0$, see \cite{Yang}. Since $t_2( - )$ is a birational invariant for smooth projective surfaces the above diagram yields a map 
$$\theta : t_2 (\tilde S_l)=t_2(S_l) \to t_2(\tilde Y_l)$$
which is a projection onto a direct  summand.   Since $q(S_l)=0$ the motive $t_2(S_l)$ splits as follows,  see \cite[Prop. 1]{Ped}

$$t_2(S_l) \simeq t_2(S_l)^{+} \oplus t_2(S_l)^{-}$$

where  $t_2(S_l)^{+}=t_2(\tilde Y_l)$ and $t_2(S_l)^{-}$ are the direct summand of $t_2(S_l)$ on which the involution $\sigma$ acts  as $+1$ and $-1$ respectively . We also have $t_2(\tilde Y_l)\ne 0$, because $p_g(Y_l) \ne 0$ and 

$$A^2(t_2(\tilde Y_l))= A^2(t_2(S_l))^{+}   = A_0(S_l)^{+}_0  \ ;\ \ \ \   A^2(t_2(S_l))^{-}  = A_0(S_l)^{-}_0;$$

where $A_0(S_l)_0 = A_0(S_l)^{+}_0 \oplus A_0(S_l)^{-}_0$.  Here $A_0(S_l)_0$ is the group of 0-cycles of degree 0 (with $\Q$-coefficients) and $A_0(S_l)^{+}_0$ is the subgroup  fixed by $\sigma$.\par
Let $\sC_l $ be the total space of lines meeting $l$ and let
$$\CD  \sC_l@>{q_l}>> X \\
@V{p_l}VV   @.         \\
S_l \subset F
\endCD $$
be the incidence diagram. Then the Abel-Jacobi map $\Phi$ and the cylinder homomorphism $\Psi$  induce
$$\Phi_l : A_i(X) \to A_{i-1}(S_l) \  ;  \  \Psi_l : A_i(S_l) \to A_{i+1}(X).$$

The following result shows the relation  between the transcendental motive $t(X)$ and the transcendental motive  of the surface $S_l$.

\begin {prop} \label{summand} 
Let $X$ be a  cubic fourfold and let $S_l$ be the surface of lines meeting a general line $l \subset X$. Then\par
(i) $t(X) \simeq t_2(S_l)(1)^{-}$\par
(ii) $\Phi_l$ induces an isomorphism: $H^4_{tr}(X,\Q) \simeq H^2(S_l,\Q)^{-}$, with  $H^2(S_l,\Q)^{-}$  the subgroup where $\sigma$ acts as -1.
 \end{prop}

\begin {proof} \bf (i) \rm  By \cite[Thm. 4.7]{Shen} the composition $\Phi_l \circ \Psi_l$ equals $\sigma - id$ and $\Psi_l \circ \Phi_l =- 2$. The Abel-Jacobi map $\Phi_l$ induces an isomorphism between $A_1(X)$ and  $\Pr(A_0(S_l)_0,\sigma)$, where $\Pr(A_0(S_l)_0,\sigma)= A_0(S_l)_0^{-}= A^3(t_2(S_l)^{-}(1))$ \cite[Def. 3.6]{Shen}. Therefore the  map 
$$\Phi_l :  A_1(X) \simeq \Q[\gamma^3/3]\oplus A_1(X)_{hom}  \to A_0(S_l)_0$$ 
yields an isomorphism between $A_1(X)_{hom}$ and $A_0(S_l)_0^{-}$.\par
\noindent  In the  incidence diagram $\sC_l$ is a $\P^1$-bundle over $S_l$ and hence $h(\sC_l) \simeq h(S_l) \oplus h(S_l)(1)$. Since $q(S_l)=0$ we the motive $hS_)$ has a  C-K decomposition $h(S_l)=\un\oplus \L^{\otimes\rho}\oplus t_2(S_l) \oplus \L^2$
and hence we get a map 
 $$ g : t_2(S_l)(1) \to t(X),$$
where  $t_2(S_l)\simeq t_2(S_l)^{+} \oplus t_2(S_l)^{-}$. The map $g$ when restricted to  $t_2(S_l)^{-}(1)$ gives
$$(g^{-})_* : t_2(S_l)(1)^{-} \to t(X),$$
such that the induced map on Chow groups
$$(g)_* : A^i(t_2(S_l)^{-}(1)) \to A^i(t(X))$$
 is an isomorphism for all $i\ge0$, because  $A^3(t_2(S_l)^{-}(1))= A_0(S_l)_0^{-} $, $A^3(t(X)) = A_1(X)_{hom}$.
while $A^i(t_2(S_l)^{-}(1)) =0$ and  $A^i(t(X))=0$ for $i \ne 3$ .Therefore  $g^{-}$ is an isomorphism in $\sM_{rat}(\C)$, see Lemma \ref{morphism}.\par
\bf (ii) \rm  In \cite[Cor. 4.8]{Shen} it is proved that the Abel-Jacobi map $\Phi_l : H^4_{tr}(X, \Z) \to H^2(S_l,\Z)^{-}$ is an isomorphism of lattices.
\end{proof}

\begin {rk}  (1) The isomorphism in (i) answers a question raised by M.Shen in a private communication. For a smooth projective surface $S$, with $q(S)=0$ and $p_g(S) >0$, equipped with an involution $\sigma$,  we can define the {\it  Prym motive $ \Pr(S,\sigma)$} to be the motive 
$$  \Pr(S,\sigma) =t_2(S)^{-}$$
where, as in \cite[Prop 1]{Ped}, $t_2(S)^{-}$ is the direct summand of $t_2(S)$ where the involution $ \sigma $ acts as $-1$. The action of $\sigma$ on $t_2(S)$ is defined via the homomorphism
$$ \Psi_S : A^2(S \times S) \to \End_{\sM_{rat}}(t_2(S))$$
which sends  the correspondence $\Gamma_{\sigma} \in A^2(S \times S)$ to $\pi^{tr}_2  \circ \Gamma_{\sigma} \circ \pi^{tr}_2 $. Here  $t_2(S) = (S,\pi^{tr}_2)$ and hence the projector $\pi^{tr}_2$ corresponds to the identity in  $\End_{\sM_{rat}}(t_2(S))$.\par
 (2) If $l \in X$ is a general line then the blow-up $\tilde X$ of $X$ along $l$ is a conic bundle $\pi : \tilde X \to \P^3$ and $Y_l =S_l/\sigma$ is the discriminant divisor. 
The involution $\sigma$ on $S_l$ is given by the double cover $S_l \to Y_l$. Therefore  (ii)   may be viewed as a generalization  of a result appearing in \cite{NS} for a conic bundle $f: X \to \P^2$. In \cite{NS} it is proved that the motive of $h(X)$ is determined by the Prym motive 
$\Pr(\tilde C/C)$, where the curve $C$ is the discriminant of $f$ and $\tilde C \to C$ is the usual double cover.
\end{rk}

\begin{rk}\label{k3prym}
In the case $p_g(S) =1$ (e.g.  $S$ a K3 surface) then $t_2(S)$ is indecomposable if $h(S)$ is finite dimensional (see \cite[Cor 3.10]{Vois 3}). Therefore the Prym motive of $S$ is either 0 or it coincides with $t_2(S)$. If $S$ is a K3 surface and  $\sigma$ is a symplectic involution then $t_2(S) \simeq t_2(S/\sigma)$ and hence $\sigma$ acts as the identity on $t_2(S)$, i.e $\Pr(S,\sigma)=0$. If $\sigma$ is non-symplectic then the quotient surface $S/\sigma$ is either an Enriques surface or a rational surface. In any case $t_2(S/\sigma) =0$ and 
$\Psi_S(\Gamma_{\sigma}) =- id_{t_2(S)}$. Therefore $\Pr(S,\sigma) =t_2(S)$. 
\end{rk}

\section { Special cubic fourfolds}
 In this section we prove  (see Thm.\ref{mapiso}) that, if 
 $F(X)$ is isomorphic to $S^{[2]}$, with $S$ a K3 surface, then $t(X)$ is isomorphic to $t_2(S)(1)$. Therefore $h(X)$ is finite dimensional if and only if $h(S)$ is finite dimensional.\par
 \noindent Recall that a cubic fourfold $X$ is \it special \rm if it contains a surface $Z$ such that its cohomological class $\zeta$ in $H^4(X,\Z)$ is not homologous to any multiple of  $\gamma^2$. Therefore $\rho_2(X) >1$. The discriminant $d$ is defined as the discriminant  of the intersection form $< ,>_D$ on the sublattice $D$ of $H^4(X,\Z)$ generated by $\zeta$ and $\gamma^2$. B.Hassett in \cite{Has 1} proved that special cubic fourfolds  of discriminant $d$ form an irreducible divisor $\sC_d $ in the moduli space $\sC$ of cubic fourfolds if and only if  $d>0$ and $d\equiv 0,2 (6)$.\par 
 
\begin{defn}\label{associated} Let $X$ be a special cubic fourfold and let  $D$ be the sublattice of $H^4(X,\Z)$ generated by $\zeta$ and $\gamma^2$. A polarized K3 surface $S$ is {\it associated} to $X$ if there is an isomorphism of lattices $ K \simeq H^2(S,\Z)_{prim}(-1)$, where  $K =D^{\perp}$ and $H^2(S,\Z)_{prim} $ denotes primitive cohomology with respect to a polarization $l\in H^2(S,\Z)$.\end{defn}

\noindent  If $X$ is a generic special cubic fourfold with discriminant of the form $d =2(n^2 +n+1)$, where $n$ is an integer $\ge2$, then the Fano variety of $X$ is isomorphic to $S^{[2]}$, with $S$ a K3 surface associated to X. Special cubic fourfolds of discriminant $d>6$ have associated K3 surface $S$  if and only if $d$ is not divisible by 4  or  9 or any odd prime $p \equiv 2 (3)$. In this case  the transcendental lattice  $T(X)$ is Hodge isometric  to $T(S)(-1)$, see \cite{Add}.\par
\noindent In the case $d =14$  the  special surface is a smooth quartic rational normal scroll. By the results in \cite{BD}  and in \cite{BRS} all the fourfolds $X$ in $\sC_{14}$ are rational (see also \cite{ABBV} for details on the derived categories approach). Moreover if $X \in (\sC_{14} - \sC_8) $, then $F(X) \simeq S^{[2]}$, where $S$ is the K3 surface of degree 14 and genus 8, parametrizing smooth quartic rational normal scrolls contained in $X$. 

\medskip

 \noindent More generally, suppose that  $X$ is special and $F(X) \simeq S^{[2]}$, with $S$ a K3 surface. Then the homomorphism $H^2(S,\Q) \to H^2(F,\Q)$ induces an orthogonal direct sum decomposition with respect to the Beauville-Bogomolov form
$$ H^2(F,\Q) \simeq H^2(S,\Q) \oplus \Q \delta,$$
with $q_F(\delta,\delta)=-2$ and $q_F$ restricted to $H^2(S,\Q)$ is the intersection form, see \cite[Rmk. 10.1]{SV 2}. Therefore
 $$H^4_{tr}(X,\Q) \simeq H^2_{tr}(S,\Q)$$ 
where $ \dim H^4_{tr}(X,\Q) =23 -\rho_2(X)$. Here $\rho_2(X) \ge2$ and hence we get
$$ \dim H^2_{tr}(F,\Q) =\dim H^2_{tr}(S,\Q) = 22 -\rho(S) = 23 -\rho_2(X) \le 21,$$
where $\rho(S)$ is the rank of $NS(S)$.\par

 \begin{thm}\label{mapiso}  Let $X$ be  a cubic fourfold and let $F =F(X)$ be the Fano variety of lines. Suppose that $F \simeq S^{[2]}$, with  $S$ a K3 surface.  Let  $P$ be the correspondence in the incidence diagram  (\ref{incidence}). Then $P_*$ induces a map of motives $\bar q : t_2(S)(1) \to t(X)$ in $\sM_{rat}(\C)$ which is  an isomorphism.  
\end{thm}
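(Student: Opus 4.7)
The plan is to combine Theorem \ref{fano} with a motivic comparison between $t_2(S)$ and the transcendental summand $h_2^{tr}(F)$ of the Fano variety $F \simeq S^{[2]}$. Theorem \ref{fano} already provides an isomorphism $h_2^{tr}(F)(1)\simeq t(X)$ induced by the incidence diagram (\ref{incidence}). So it suffices to produce an isomorphism $t_2(S)\simeq h_2^{tr}(F)$ given by correspondences and then compose; by construction the resulting composite will factor as $t_2(S)(1)\hookrightarrow h(F)(1)\subset h(P)\xrightarrow{q_*} h(X)\twoheadrightarrow t(X)$, hence will be the map induced by $q$.

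To produce the identification $t_2(S)\simeq h_2^{tr}(F)$, I would use the universal length-$2$ subscheme $\Xi\subset S\times S^{[2]}$, together with its two projections $\rho:\Xi\to S$ and $\sigma:\Xi\to S^{[2]}\simeq F$. The class of $\Xi$, viewed as a correspondence, yields morphisms of Chow motives in both directions which, once cut by the Chow-K\"unneth projectors $\pi^{tr}_2(S)$ and $\pi^{tr}_2(F)$ (the latter coming from the Shen-Vial Chow-K\"unneth decomposition for hyperk\"ahler fourfolds of $K3^{[2]}$-type, already used in the proof of Theorem \ref{fano}), restrict to morphisms $t_2(S)\rightleftarrows h_2^{tr}(F)$.

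To check $\bar q$ is an isomorphism I would apply Lemma \ref{morphism}. By Lemma \ref{chowtra}, $A^i(t(X))=0$ for $i\neq 3$ and $A^3(t(X))=A_1(X)_{hom}$; analogously $A^i(t_2(S)(1))=0$ for $i\neq 3$ while $A^3(t_2(S)(1))=A^2(t_2(S))=T(S)=A_0(S)_0$, since $q(S)=0$ for a K3 surface. Thus the only verification is that $\bar q_*\colon A_0(S)_0\to A_1(X)_{hom}$ is an isomorphism.

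The main obstacle lies exactly in establishing this Chow group isomorphism via the correspondences at hand. It decomposes as the composition of two separate identifications: on the one hand, the incidence correspondence yields $A_1(X)_{hom}\simeq A^2(F)_2$, as already extracted from the Shen-Vial Fourier calculus in the proof of Theorem \ref{fano}; on the other hand, under $F\simeq S^{[2]}$, the universal subscheme $\Xi$ together with Beauville's splitting of the Chow ring of the Hilbert scheme of a K3 surface \cite{SV 1} identifies $A^2(F)_2$ with $A_0(S)_0$. Composing these two correspondence-induced isomorphisms yields the required map on Chow groups, and Lemma \ref{morphism} then gives the motivic isomorphism $\bar q\colon t_2(S)(1)\iso t(X)$.
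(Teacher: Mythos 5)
Your overall architecture coincides with the paper's: the map is built as $t_2(S)(1)\hookrightarrow h(F)(1)\subset h(P)\xrightarrow{q_*}h(X)\twoheadrightarrow t(X)$ (the paper uses the de Cataldo--Migliorini summand $h(S)(1)\subset h(S^{[2]})$ rather than the universal subscheme, but these play the same role), and the isomorphism is then checked on Chow groups via Lemma \ref{morphism}, exactly as you propose. You also correctly reduce everything to the single nontrivial assertion that $A_0(S)_0\to A_1(X)_{hom}$ is an isomorphism, and the first half of your factorization, $A_1(X)_{hom}\simeq A^2(F)_2$, is indeed already available from the Shen--Vial computations quoted in the proof of Theorem \ref{fano}.

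The gap is in the second half, where you write that the universal subscheme ``together with Beauville's splitting of the Chow ring of the Hilbert scheme'' identifies $A^2(F)_2$ with $A_0(S)_0$. The splitting of $A^*(S^{[2]})$ gives the direct-sum decomposition into graded pieces, but it does not by itself identify the piece $A^2(F)_2$ with $A_0(S)_0$, and in particular it does not give the \emph{injectivity} of the correspondence-induced map $A_0(S)_0\to A^2(F)_2$; this is precisely the non-formal content of the step. The paper handles it differently: it works with $A^4(F)$ rather than $A^2(F)$, showing that $x\mapsto [c_S,x]$ induces an injection $A_0(S)\to A^4(F)$ whose image of $A_0(S)_0$ is $(A^4(F)_2)_{hom}$ by \cite[Prop. 15.6]{SV 2}, and the injectivity itself is imported from Banerjee's theorem \cite[Thm. 2.1]{Ba} on $0$-cycles of the exceptional divisor of $S^{[2]}\to S^{(2)}$. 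Your version of this step needs an equally concrete input (either Banerjee's result, or an explicit Shen--Vial statement that the incidence/universal-subscheme correspondence induces an isomorphism $A_0(S)_0\simeq A^2(F)_2$); as written, the citation does not cover it, so the crux of the proof is asserted rather than proved.
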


\begin{proof}  In (\ref{incidence}) the universal line $P$ as a correspondence in $A_5(F\times X)$ gives a map 
$$P_* : h(F)(1) \to h(X)$$
 By the results in \cite[Thm. 6.2.1]{deC-M} $h(S)$ is a direct summand of $h(S^{[2]})=h(F)$. Therefore we get a map  
$$ \CD  h(S)(1)@>>>h(F)(1) @>{P_*}>>h(X) \\ \endCD$$
  Let 
$$h(S) \simeq \un \oplus \L^{\oplus \rho(S)} \oplus t_2(S) \oplus \L^2$$
be a refined Chow-K\"unneth decomposition, as in \cite[Sect. 7.2.2]{KMP}.  By composing with the inclusion $t_2(S)(1) \to h(S)(1)$ and the surjection $h(X) \to t(X)$ we get  a map of motives  in  $\sM_{\rat }(\C)$,
$$P_* :  t_2(S)(1) \to t(X)$$ 
For two distinct points $x,y \in S$ let us denote by $[x,y] \in F =S^{[2]}$ the point of $F$ that corresponds to the subscheme $x \cup y \subset S$. If  $x=y$  then $[x,x]$ denotes the element in $A^4(F)$ represented by any point corresponding to a non reduced subscheme of length 2 on $S$ supported on $x$. With these notations the special degree 1 cycle $c_F\in A^4(F)$ (see \cite[Lemma A.3]{SV 2}), given by any point on a rational surface $W \subset F$, is represented by the point $[c_S, c_S]  \in F$, where $c_S$  is the Beauville-Voisin cycle in $A_0(S)$ such that $c_2(S) = 24  c_S$. We also have (see \cite[Prop. 15.6]{SV 2}) 
$$(A^4(F)_2 )_{hom} = <[c_S,x]- [c_S,y]>.$$
  We claim  that the map $\phi : A_0(S) \to A_0(S^{[2]})=A^4(F)$ sending $[x]$ to $[c_S, x]$ is injective and hence 
 $$A_0(S)_0  \simeq (A^4(F)_2)_{hom}.$$
  \noindent The variety $S^{[2]}$ is the blow-up of the symmetric product $S^{(2)}$ along the diagonal $\Delta \cong S$. Let $\tilde S$ be the inverse image of $\Delta$ in $S^{[2]}$. Then $\tilde S$ is the image of the closed embedding $s \to [c_S, s]$. By a result proved in \cite[Thm. 2.1]{Ba} the induced map of 0-cycles $A_0( \tilde S) \to A_0(S^{[2]})$ is injective. Therefore the map $\phi$  is injective.\par
 \noindent From the isomorphism $A_0(S)_0 \simeq (A^4(F)_2)_{hom}$ we get 
 $$A^3(t_2(S)(1) = A^2(t_2(S))=A_0(S)_0 \simeq A_1(X) _{hom}\simeq A^3(t(X))$$
 Since $A^i(t_2(S)(1)) =A^i(t(X))=0$ for $i \ne 3$ the map $\bar P  : t_2(S)(1) \to t(X)$ gives an isomorphism on all Chow groups .Therefore  $t_2(S)(1) \simeq t(X)$ 
    \end{proof}
    
\begin{rk} Let $X$ be a cubic fourfold such that there exist K3 surfaces $S_1$ and $S_2$ and  isomorphisms $r_1 : F(X) \to S^{[2]}_1$ and $r_2: F(X) \to S^{[2]}_2$ with  $r_1^*\delta_1 \ne r^*_2\delta_2$, as in \cite[Def. 6.2.1]{Has 1}, where $H^2(F,\Q)\simeq H^2(S_1,\Q) \oplus \Q \delta_1 \simeq H^2(S_2,\Q) \oplus \Q\delta_2$.
Then, by Thm. \ref{mapiso}, we get  $t_2(S_1) \simeq t_2(S_2)$, and hence the motives $h(S_1)$ and $h(S_2)$ are isomorphic. \end{rk}

\begin{cor}\label{k3finite} Let $X$ be  a cubic fourfold and let $F =F(X)$ be the Fano variety of lines. Suppose that   $F \simeq S^{[2]}$, with  $S$ a K3 surface. Then $h(X)$ is finite dimensional   if and only if   $h(S)$ is finite dimensional in which case the motive $t(X)$ is indecomposable.
 \end{cor}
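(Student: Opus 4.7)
The proof should be a fairly direct consequence of Theorem \ref{mapiso} combined with the explicit Chow--K\"unneth decompositions already in hand, so the plan is to reduce both ``finite dimensionality'' statements to statements about the transcendental motive, and then invoke known facts about $t_2(S)$ for a K3 surface.

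First I would reduce finite dimensionality of $h(X)$ to finite dimensionality of $t(X)$. The Chow--K\"unneth decomposition (\ref{CK}) exhibits $h(X)$ as a direct sum of $t(X)$ and copies of Tate twists $\mathbf{L}^i$. Each $\mathbf{L}^i$ is evenly finite dimensional, hence $h(X)$ is finite dimensional if and only if $t(X)$ is. Similarly, the refined Chow--K\"unneth decomposition of a K3 surface
\[
h(S) \simeq \un \oplus \mathbf{L}^{\oplus \rho(S)} \oplus t_2(S) \oplus \mathbf{L}^2
\]
(recalled just before Theorem~\ref{mapiso}) shows that $h(S)$ is finite dimensional if and only if $t_2(S)$ is. Since $t_2(S)(1)$ is finite dimensional if and only if $t_2(S)$ is, the ``if and only if'' in the corollary follows directly from the isomorphism $t(X) \simeq t_2(S)(1)$ provided by Theorem~\ref{mapiso}.

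For the indecomposability statement, assume both motives are finite dimensional. The key input is Remark~\ref{k3prym}: for a surface with $p_g = 1$ (in particular a K3 surface) and finite dimensional Chow motive, the transcendental motive $t_2(S)$ is indecomposable. The underlying reason is that, via the isomorphism (\ref{isokmp}), $\End_{\sM_{rat}}(t_2(S))$ is detected on cohomology once finite dimensionality is assumed, and the transcendental Hodge structure of a K3 surface is irreducible since $\dim H^{2,0}(S) = 1$ forces any nonzero sub-Hodge structure of $H^2_{tr}(S,\Q)$ to contain $H^{2,0}$ and hence to equal the whole space. A Tate twist preserves (in)decomposability, so $t(X) \simeq t_2(S)(1)$ is indecomposable as well.

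The only even mildly delicate step is the passage from the indecomposability of $t_2(S)$ to that of $t(X)$, but this is automatic because a Tate twist is an equivalence of categories on $\sM_{rat}(\C)$, and the main substantive work has already been carried out in Theorem~\ref{mapiso}. I do not expect any serious obstacle here; the corollary is essentially a bookkeeping consequence of what has already been proved.
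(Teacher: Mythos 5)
Your proof is correct and follows essentially the same route as the paper: reduce both finite-dimensionality statements to the transcendental summands via the Chow--K\"unneth decompositions, transfer across the isomorphism $t(X)\simeq t_2(S)(1)$ of Theorem~\ref{mapiso}, and conclude indecomposability from the fact that $t_2(S)$ is indecomposable for a finite-dimensional K3 motive (the paper cites Voisin for this last point, where you instead sketch the standard $p_g=1$ argument already recorded in Remark~\ref{k3prym}).
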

 
 \begin{proof} If $h(X)$ is finite dimensional then also $t(X)$ is finite dimensional and hence, by Theorem \ref{mapiso},  $t_2(S)$ is finite dimensional. Therefore $h(S)$ is finite dimensional. Conversely, if $h(S)$ is finite dimensional then also $t_2(S)$ and $t(X)$ are finite dimensional, by Theorem \ref{mapiso}. From the Chow-K\"unneth decomposition in (\ref{CK}) we get that $h(X)$ is finite dimensional.   If $h(S)$ is finite dimensional  then the motive $t_2(S)$ is indecomposable, see \cite[Cor 3.10]{Vois 3}, and hence also $t(X)$ is indecomposable.  \end{proof}
 
\begin {rk}   If the motive $h(X)$  of a cubic fourfold is finite dimensional then  the transcendental part $t(X)$  of $h(X)$ is, up to isomorphisms in $\sM_{rat}(\C)$, independent of the Chow-K\"unneth decomposition  $h(X) =\sum_i h_i(X)$ in (\ref{CK}). If   $h(X) =\sum_i \tilde h_i(X)$ is another Chow-K\"unneth decomposition, with $\tilde h_i(X) =(X,\tilde \pi_i)$, then, by \cite[Thm. 7.6.9]{KMP}, there is an isomorphism $\tilde h_i(X) \simeq h_i(X)$ and 
$\tilde  \pi_i =(1+Z)\circ \pi_i \circ (1+Z)^{-1}$, where $Z \in A^4(X \times X)_{hom}$ is a nilpotent correspondence. In particular 
$$\tilde \pi_4= (1+Z)\circ \pi_4\circ (1+Z)^{-1}=(1+Z)\circ (\pi^{alg}_4+\pi^{tr}_4) \circ (1+Z)^{-1}$$
and hence $\tilde h_4(X)$ contains as a direct summand a submotive $\tilde t(X) =(X, (1+Z)\circ \pi^{tr}_4 \circ (1+Z)^{-1})$ isomorphic to $t(X)$.\par
\noindent However, differently from the case of the transcendental motive $t_2(S)$ of a surface $S$, the motive $t(X)$ is not a birational invariant. In fact $t(X)\ne 0$ for a rational cubic fourfold $X$ such that $F(X) \simeq S^{[2]}$, with $S$ a K3 surface, while $\P^4_{\C}$ has no transcendental motive.\par
\end{rk}

According to Cor. \ref{k3finite}, if $X$ is a special cubic fourfold with $F(X) \simeq S^{[2]}$, and $h(X)$ is  finite dimensional, then $t(X)$ is indecomposable. The following proposition shows that, if $X$ is not special  and $h(X)$ is  finite dimensional, then  $t(X)$ is indecomposable.

\begin{prop}\label{superfici} Let $X$ be a very general cubic fourfold, i.e. $\rho_2(X)=1$. Then\par

(i) The transcendental motive $t(X)$ is not isomorphic to $t_2(S)(1)$, for a smooth projective surface $S$;

(ii) If  $h(X)$ is finite dimensional  $t(X)$ is indecomposable.\end{prop}

\begin {proof} (i) Suppose that there exists a smooth projective surface $S$ such that $t(X) \simeq t_2(S)(1)$. Then

$$H^4(t(X))=H^4_{tr}(X,\Q) \simeq H^4(t_2(S)(1)) =H^2_{tr}(S,\Q).$$

Since  $h^{3,1}(X)=h^{1,3}(X)=1$, we get $h^{2,0}(S)=h^{0,2}(S)=1$ and therefore the surface $S$ has geometric genus $p_g(S)=1$. By the results in \cite[Sect. 2]{Mo} there exists a K3 surface $\tilde S$ such that $H^2_{tr}(S,\Q) \simeq H^2_{tr}(\tilde S,\Q)$. Since the dimension of $H^2_{tr}(\tilde S,\Q)$ is at most 21 we should also have $\dim H^4_{tr}(X,\Q) \le 21$, while, for a very general cubic fourfold , $\dim H^{tr}(X,\Q) =23 -1 =22$.\par
\noindent (ii)  Let us define the primitive motive $h(X)_{prim}=( X,\pi_{prim},0)$ as in \cite[Sect. 8.4]{Ki}, where
$$ \pi_{prim} = \Delta_X - (1/3)\sum_{0 \le i \le 4}( \gamma^ {4-i} \times \gamma^i).$$
and
$$ H^*(h(X)_{prim})= H^4(X,\Q)_{prim} .$$
\noindent Since  $X$ is very general, $\rho_2(X)=1$ and $ A^2(X)$ is generated by the class $\gamma^2$. Therefore in the Chow-K\"unneth decomposition of $h(X)$ in (\ref{CK})  we have
$h(X)_{prim} = h^{tr}_4(X) =t(X)$ and
$$h_4(X) =h^{alg}_4(X) +h^{tr}_4(X) \simeq \L \oplus h(X)_{prim}.$$
Let $\sM_{hom}(\C)$ be the category of homological motives and let $\widetilde \sM_{hom}(\C)$ be the subcategory generated by the motives of all smooth projective varieties  $V$ such that the K\"unneth components of the diagonal in $H^*(V \times V)$ are algebraic.  The Hodge realization functor
$$H_{Hodge} : \sM_{rat}(\C) \to HS_{\Q}$$
to the Tannakian category of $\Q$-Hodge structures induces a faithful functor $\widetilde \sM_{hom}(\C) \to HS_{\Q}$. Let us denote $\bar h(X):= h^{hom}(X) \in \widetilde \sM_{hom}(\C)$. Since  $X$ is very general   $\End_{HS} (H^4(X,\Q)_{prim} )=\Q[id]$, see \cite[Lemma 5.1]{Vois 2}. Therefore 
$\End_{\sM_{hom}}(\bar h(X)_{prim}) \simeq \Q[id]$ and hence
$$\End_{\sM_{hom}}(\bar h^{tr}_4((X)) \simeq  \End_{\sM_{hom}}(\bar h(X)_{prim}) \simeq \Q[id]$$
 If $h(X)$ is finite dimensional then the indecomposability of $\End_{\sM_{hom}}(\bar h^{tr}_4(X))$ in $\sM_{hom}(\C)$ implies the indecomposability in $\sM_{rat}(\C)$. Therefore  
 $$\End_{\sM_{rat}}(t(X)) \simeq \End_{\sM_{rat}}(h(X)_{prim}) \simeq \Q[id]$$
and the transcendental motive of $X$ is  indecomposable.\par
\end{proof}   

\begin{rk}\label{congetture} Let $X$ be a cubic fourfold and $l$ a general line in $X$. By Prop. \ref{summand} there is an isomorphism of motives

$$\Pr(S_l/Y_l) \simeq t_2(S_l)^{-}(1) \simeq t(X)$$

Suppose that the Prym  motive is isomorphic to the  (twisted) transcendental  motive of a smooth surface $Z$. Then, by  Prop. \ref{superfici} (i),  $X$ is special, i.e. $\dim A^2(X) \ge 2$. By the same argument as in the proof of Prop. \ref{superfici} (i) the surface $Z$ has geometric genus $p_g(Z)=1$ and there exists a K3 surface $S$ such that  

$$H^2_{tr}(Z,\Q) \simeq H^2_{tr}(S,\Q).$$

By assuming that the Hodge conjecture holds for $Z \times S$, the above isomorphism is induced by a correspondence $\Gamma \in A^2(Z \times S)$, see \cite{Mo}.
Therefore $\Gamma$ gives a map of transcendental motives  $t_2(Z) \to t_2(S)$, that induces an isomorphism on the transcendental cohomology and hence, assuming Kimura's conjecture, is an isomorphism in $\sM_{rat}(C)$. Then $t(X) \simeq t_2(S)(1)$. \end{rk}

 \section{Rationality conjectures}

Let $X$ be a cubic fourfold. It was conjectured in \cite{Kuz} that $X$ is rational if and only if there exists a semi-orthogonal decomposition of the derived category $\mathbf{D}^b(X)$ of bounded complexes of coherent sheaves

$$\mathbf{D}^b(X) =<\sA_X,\sO_X,\sO_X(1),\sO_X(2)>, $$

\noindent such that $\sA_X$ is equivalent to the category $\mathbf{D}^b(S)$ where $S$ is a K3 surface.  If $X$ has an associated K3 surface $S$ , in the sense of Kuznetsov, then the motive $h(S)$ is uniquely determined, up to isomorphisms, by $X$. Let $\mathbf{D}^b(S_1) $ and $\mathbf{D}^b(S_2)$ be equivalent. It was conjectured by Orlov that this implies that the motives $h(S_1)$ and $h(S_2)$ are isomorphic. The conjecture has been proved in \cite{DelP-P} in the case $h(S_1)$ (and hence also $h(S_2)$) is finite dimensional and recently extended by D.Huybrechts  in \cite{Huy 2}  to all K3 surfaces over an algebraically closed field. \par
\noindent Let us denote by $\sC$ the moduli space of smooth cubic fourfolds. As it is customary, we will denote by $\sC_d\subset\sC$ the irreducible divisors that parametrize special cubic fourfolds with an intersection lattice whose determinant is $d$. Let $X$ be a general cubic fourfold inside $\sC_d$, where $d$ satisfies the following condition:

\medskip

(**)  $d$ is not divisible by 4,9 or a prime $p \equiv 2 (3)$.

\medskip

\noindent Hassett \cite{Has 1} has shown that $X \in \sC_d$ has an associated K3 surface, in the sense of Def. \ref{associated}, if and only if  satisfies (**). Then Addington and Thomas in \cite{AT} proved that a general such $X$ has an associated K3 surface in the sense of Kuznetsov.  Therefore, for a general cubic fourfold, Kuznetsov conjecture is equivalent  to the following conjecture, that has been certainly around for a while. 

\begin{conj}\label{motivkuz} A cubic fourfold $X\subset \P^5$ is rational if and only if it is contained in $\sC_d$, with $d$ satisifying (**).
\end{conj}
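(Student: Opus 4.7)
The plan is to split Conjecture \ref{motivkuz} into its two implications and, in both directions, reduce the geometric question of rationality to the motivic assertion that $t(X) \simeq t_2(S)(1)$ for some K3 surface $S$ associated to $X$.

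For the ``if'' direction, starting from $X \in \sC_d$ with $d$ satisfying $(\ast\ast)$, Hassett's theorem provides an associated K3 surface $S$ in the Hodge-theoretic sense of Definition \ref{associated}, and the Addington--Thomas result ensures that a general such $X$ admits an associated K3 in the sense of Kuznetsov. When $F(X) \simeq S^{[2]}$, Theorem \ref{mapiso} yields $t(X) \simeq t_2(S)(1)$ in $\sM_{rat}(\C)$ under the assumption of finite dimensionality. The next step would be to upgrade this motivic equivalence into a geometric birational equivalence $\P^4 \dashrightarrow X$, along the lines of Sections~7--8, by producing on $X$ a fibration in rational varieties over a rational base using Vial's results on motives of fibrations with rational fibers, with the motivic isomorphism $t(X) \simeq t_2(S)(1)$ serving as a guide. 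This programme is already known to succeed for $d = 14, 26, 38, 42$ through explicit classical constructions.

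For the ``only if'' direction, one invokes Kulikov's Theorem \ref{kuli}: if $X$ is rational, then some surface $S$ in the indeterminacy locus of a birational map $r : \P^4 \dashrightarrow X$ has a polarized Hodge structure $\sT_S$ decomposing as $-f^*(\sT_X)(1) \oplus \widetilde{\sT}_S$. Combined with Lemma \ref{deco}, and assuming the Hodge conjecture for $S \times S$ together with the finite dimensionality of $h(S)$, this yields a corresponding splitting of $t_2(S)$ with a summand whose transcendental lattice carries $T_X(1)$. The program would then be to promote this decomposition to a genuine Hodge isometry between $T_X$ and $T_S(-1)$ for a K3 summand of $S$ (or for a suitably chosen surface of K3 type in the resolution), and to use Hassett's lattice-theoretic characterization to conclude that the discriminant of $X$ must satisfy $(\ast\ast)$.

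The principal obstacle is the ``only if'' direction, which subsumes the long-standing problem of proving irrationality of the very general cubic fourfold: even with the strong assumptions above, Corollary \ref{count} provides only that the locus of decomposable polarized Hodge structures is a countable union of proper subvarieties in any family of surfaces, and converting this countability into a rigid lattice constraint on $d$ requires substantial additional input, quite possibly a refinement of Kulikov's theorem controlling which surfaces can appear in the resolution. A secondary obstacle is the ``if'' direction, where rationality is currently established only in finitely many divisors $\sC_d$; a uniform geometric construction for all admissible $d$, driven by the motivic identity $t(X) \simeq t_2(S)(1)$, appears to be a delicate problem in its own right and likely the main route by which the motivic framework of this paper would eventually bear on rationality.
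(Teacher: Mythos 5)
The statement you are addressing is stated in the paper as a \emph{conjecture} (Conjecture \ref{motivkuz}), and the paper offers no proof of it; it is presented as an open problem equivalent (for general $X$, via Addington--Thomas) to Kuznetsov's rationality conjecture. Your proposal, to its credit, is candid that it is a programme rather than a proof, but it should be said plainly: neither direction is established by the arguments you sketch, and the gaps are not technical refinements but the central open problems of the subject. In the ``if'' direction, the motivic isomorphism $t(X)\simeq t_2(S)(1)$ (Theorem \ref{mapiso}, Proposition \ref{associatedmotive}) is a consequence of the lattice/Hodge-theoretic data, not a mechanism for producing a birational map $\P^4\dashrightarrow X$; the paper itself emphasizes that $t(X)$ is \emph{not} a birational invariant (it is nonzero for the known rational special cubics), so no purely motivic identity can certify rationality. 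The constructions of Sections~7--8 run in the opposite direction: they start from a known geometric fibration with rational fibers and a rational section and deduce the motivic isomorphism, not the reverse. There is no uniform construction for all admissible $d$, and the paper does not claim one.

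In the ``only if'' direction the obstruction is worse. Kulikov's Theorem \ref{kuli} gives a decomposition of $\sT_S$ for \emph{some} surface $S$ in the resolution of a birational map, but the paper explicitly records that Kulikov's intended application fails because the nondecomposability hypothesis is false (the Fermat sextic, \cite{ABB}). Corollary \ref{count} salvages only a countability statement, within a single family, under the unproven hypotheses of the Hodge conjecture for $S\times S$, finite dimensionality of $h(S)$, and indecomposability of $t_2(S_K)$ for the generic fibre; it does not identify the summand $-f^*(\sT_X)(1)$ with the primitive cohomology of a polarized K3, which is what Hassett's lattice-theoretic criterion would require to force $(\ast\ast)$. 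Passing from ``$\sT_X(1)$ embeds as a polarized Hodge substructure of $\sT_S$ for some surface $S$'' to ``$\sT_X$ is Hodge-isometric to the primitive lattice of a K3'' is precisely the step nobody knows how to take, and it subsumes the irrationality of the very general cubic fourfold. So the verdict is that your proposal correctly maps the conjecture onto the paper's motivic framework, but there is no proof here to compare against, and the two ``obstacles'' you name are in fact the entire content of the conjecture.
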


\begin{prop}\label{associatedmotive} Let $X$ be a  cubic fourfold in $\sC_d$,  where $d$ satisfies (**). Assuming  Kimura's conjecture, there exists a K3 surface $S$ and an isomorphism of motives $t_2(S)(1) \simeq t(X)$. \end{prop}

 \begin {proof} 
By Theorem 1.2 in \cite{AT} there exists a polarized K3 surface $S$ of degree $d$ and  a correspondence $\Gamma \in A^3(S \times X)$ which induces an Hodge isometry between the (shifted) primitive cohomology of   $S$ and   the lattice $<\gamma^2,Z>^{\perp} $ inside $H^4(X,\Z)$. Here  the class of $Z$ is not homologous to $\gamma^2$. Let  $PHS_{\Q}$ be the semisimple abelian category  of polarized Hodge structures. Then  $\Gamma$ induces an isomorphism between the polarized Hodge structures $T(S)_{\Q}(1)$ and $T(X)_{\Q}$ in $PHS_{\Q}$,  where $T(S)$ and $T(X)$ are the transcendental lattices of $S$ and $X$ respectively. Let $\sM^B_{hom}(\C)$ be the subcategory of $\sM_{hom}(\C)$ generated by the homological motives $h_{hom}(X)$ of smooth complex projective varieties $X$ satisfying the standard  conjecture $B(X)$. Since $B(X)$ implies the standard conjecture $D(X)$, for smooth  varieties over $\C$, the category $\sM^B_{hom}$ is contained in the  category $\sM_{num}(\C)$ of numerical motives and hence it is semisimple. The  Hodge realization  functor 

$$ H_{Hodge} : \sM_{rat} (\C)\to PHS_{\Q},$$ 

\noindent factors trough $\sM^B_{hom}(\C)$ and the induced functor $H_{Hodge} : \sM^B_{hom}(\sC) \to PHS_{\Q}$ is faithful and exact. Both the K3 surface $S$ and the cubic fourfold $X$ satisfy $B(X)$ and hence $M_{hom}$ and $N_{hom}$ belong to $\sM^B_{hom}(\C)$, where $M_{hom}$ and $N_{hom}$ are the  images of $t_2(S)(1)$ and $t(X)$ in $ \sM_{hom}(\C)$, respectively. Then $M_{hom}$ and $N_{hom}$ have isomorphic  images in $PHS_{\Q}$ and hence the correspondence $\Gamma$ induces an isomorphism between $M_{hom} $ and $N_{hom}$ in $\sM^B_{hom}(\C)$. By Kimura's conjecture on the finite dimensionality of motives   the functor $F : \sM_{rat}(\C)\to \sM^B_{hom}(\C)$ is conservative, i.e. it preserves isomorphisms, see \cite[Thm. 8.2.4]{AK}. Therefore  the correspondence $\Gamma$ gives  an isomorphism between $t_2(S)(1)$ and $t(X)$ in $\sM_{rat}(\C)$. \end{proof}
Conjecture \ref{motivkuz} and Prop. \ref{associatedmotive} clearly suggest the following

\begin{conj}\label{raziomotiva}
If a cubic fourfold $X$ is rational then there exist   a K3 surface $S$ and an isomorphism  of transcendental motives $t(X) \simeq t_2(S)(1)$.
\end{conj}

  \section {Cubic fourfolds  fibered over a plane}
  
Let $X$  be a rational cubic fourfold Then by considering the surfaces blown up in a birational map  $\rho : \P^4 \dashrightarrow X$ one sees that  there are smooth projective surfaces $S_1,\dots S_n$ such that the transcendental  motive $t(X)$ is a direct summand of 
$\sum_{1 \le i \le n}t_2(S_i)(1)$, see \cite[Prop. 17]{Has 2}. Therefore the motive $h(X)$ is finite dimensional if all the surfaces $S_i$ have finite dimensional motives. 
Y.Zarhin  in \cite{Za} gives a restriction  for  the types  of surfaces that can appear when  resolving the indeterminacy  of  $\rho$. 

\medskip

\noindent Let us check two examples where we have a K3 surface $S$, and  $t(X) \simeq t_2(S)(1)$. This is the case for instance if $X$ contains two planes. Then $X$ is rational and $S$ is a K3 surface, a complete intersection of hypersurfaces of bidegrees (1,2), that is the indeterminacy locus of a birational map $\rho: \P^2 \times \P^2 \dashrightarrow X$. The map $\rho$ is  defined by taking the unique line trough a point $P$ joining the two planes and intersecting with X, see \cite[1.2]{Has 2}. If  $X$ is a generic cubic fourfold in $\sC_{14}$ then $X$ is rational and there is a birational map $\rho : Q \to X$, where $Q$ is a smooth quadric hypersurface in $\P^5$.  The indeterminacy locus of $\rho$ is a surface  $S'$ birationally equivalent to a K3 surface $S$ such that $F(X) \simeq S^{[2]}$, see   \cite[Thm. 2.2]{BRS}). Therefore, by Thm. \ref{mapiso}, we get $t(X) \simeq t_2(S)(1) =t_2(S')(1)$.

\medskip

In this section we consider the case of a cubic fourfold $X$ endowed with a rational map $f: X\dashrightarrow \P^2$ such that the fibration $f : \tilde X \to \P^2$ obtained by resolving the base locus of $f$ has rational fibers. Then, according to \cite[(2)]{Vial 1} the motive $h(\tilde X)$ splits as follows 

\begin{equation} \label{decoVial}
h( \tilde X) \simeq h(\P^2) \oplus h(\P^2)(1) \oplus h(\P^2)(2) \oplus M(1)
\end{equation}

where M is isomorphic to a direct summand of the motive of some smooth surface $Z$. Suppose that $f$ is obtained via a linear system having a smooth surface $T \subset X$ such that $t_2(T)=0$ as base locus, and call $ \tilde X \to X$ the blow-up along $T$. Then $h(\tilde X) \simeq h(X) \oplus h(T)(1)$ and  

\begin{equation}\label{mappahom} 
A_1( \tilde X)_{hom} \simeq A_1(X)_{hom} \oplus A_0(T)_0, 
\end{equation}

where $A^3(\tilde X) =A^3(t(\tilde X))=A_1( \tilde X)_{hom}$ and $A^3( X) =A^3(t( X)) =A_1( X)_{hom}$. Since $t_2(T)=0$, by taking a reduced Chow-k\"unneth decomposition $h(T) =\un \oplus h_1(T) \oplus h^{alg}_2(T) \oplus h_3(T)\oplus \L^2$, we get

$$ A^3(h_3(T)(1))=A^2(h(T))= A^2(h_3(T) )=A_0(T)_0 \simeq (\Alb T)_{\Q}.$$
 
Therefore the isomorphism in  (\ref{mappahom}) implies $A^3(t(\tilde X)) \simeq A^3(t(X))\oplus A^3(h_3(T)(1))$. Since the other Chow groups vanish on both sides we get an isomorphism of motives  $t(\tilde X) \simeq  t(X) \oplus h_3(T)(1)$. The motive $h_3(T)$, being  a direct summand of the motive of $\Alb T$, is finite dimensional, see \cite[6.2.12]{MNP}. Therefore the motive $h(X)$ is finite dimensional if and only if $h(\tilde X)$ is finite dimensional that is the case if the surface $Z$ appearing in (\ref{decoVial}) has a finite dimensional motive.\par
\noindent Examples  of this situation are general cubic fourfolds $X$ belonging either to $\sC_8$ or to  $\sC_{18}$, that are conjecturally not rational. In the first case $T$ is a plane and the fibers  of $\pi$ are quadrics, in the second case $T$ is ruled elliptic and  the  fibers are  del Pezzo surfaces of degree 6.\par
\noindent In order to identify the surfaces $Z$ we will use the following  proposition, that comes from the results in \cite[Prop. 6.7]{Vial 1}.\
 
\begin{prop}\label{vialismo}
Let $X$ be a cubic fourfold containing a surface $T$  with $t_2(T)=0$ and let $\pi: \tilde X \to X$ be the blow-up of $X$ along $T$ with $E$ exceptional divisor. Let 
 $f : \tilde X \to \P^2$ be a surjective morphism. Let $D$ be the discriminant  curve of the fibration $f$ and let $B^o=\P^2- D$. Assume that for all $t \in B^o$, the  fibers $\tilde X_t$ are smooth rational surfaces. Then  there is a finite number of smooth surfaces $\tilde B_i$, for $i =1\dots n$, with surjective and  finite maps  $r_i  : \tilde B_i \to \P^2$, such that the motive $h(X)$ is finite dimensional if all the motives $h(B_i)$ are finite dimensional. In this case the transcendental motive $t(X)$ is a direct summand of $ \bigoplus_{1\le i \le n}t_2(B_i)(1)$

\end{prop}
\begin {proof} Let $t \in\P^2 $ and let $ j_t: \tilde X_t \to \tilde X$ be the inclusion. The induced map on  Chow groups $(j_t)_* : A_1(\tilde X_t) \to A_1(\tilde X)$ fits into the following diagram

$$ \CD  A_1(\tilde X_t)@>{(j_t)_*}>> A_1( \tilde X)\\
@VVV                      @V{cl}VV   \\
H^2(\tilde X_t)@>{(j_t)_*}>>   H^6(\tilde X,\Q) \endCD $$

Here $H^6 (X,\Q) \simeq \Q[\gamma^3/3]$, with $\gamma \in A^1(X)$ a hyperplane section. From the long exact sequence of cohomology groups

$$ \cdots \to H^n(X,\Q)) \to H^n(T,\Q)) \oplus H^n(\tilde X)\to H^n(E,\Q)\to H^{n+1}(X,\Q)\to \cdots$$ 
where $E$ is the exceptional divisor of the blow-up, we get an isomorphism  $H^6(\tilde X ,\Q) \simeq H^6(X, \Q)\oplus \Q$, with $\Q  \simeq H^6(E,\Q)$. Therefore the image of $ (j_t)_* $  lies in $A_1(\tilde X)_{hom}$. \par
\noindent From the isomorphism in (\ref{mappahom}) and the surjective homomorphism
 $$ A_1(X)_{hom}\oplus A_1(E)_{hom} \to A_1(\tilde X)_{hom}\to 0$$
 we get that the image of $A_1(E)_{hom} $ in $A_1( \tilde X)$ is isomorphic to $A_0(T)_0$.  
Therefore the map

 $$\CD  \bigoplus_{t \in \P^2} A_1(\tilde X_t)@>{(j_t)_*} >>A_1(\tilde X)_{hom}, \endCD $$
 
when composed with the projection $A_1(\tilde X)_{hom} \to A_1(X)_{hom}$, gives  a surjective map

\begin{equation}\label{mappafibre}
 \bigoplus_{t \in \P^2} A_1(\tilde X_t) \to A_1(X)_{hom}.
\end{equation}

\noindent  Let $\sH=\Hilb_1(\tilde X/\P^2)$ be the relative Hilbert scheme whose fibers  parametrize curves in the fibers of $f$. Let
 
  $$\CD  \sC @>{q}>>  \tilde X \\
 @V{p}VV            @.  \\
\sH    \\
 @V{\pi}VV \\
 \P^2  \endCD $$
 
be the incidence diagram,  where $\sC$ is the universal family over  $\sH$, i.e. $\sC =\{ (C,x) | x \in C\} \subset \sH \times \tilde X$. Then the map

$$p^*q_* : A_0(\sH) \to A_1( \tilde X)_{hom} \to A_1(X)_{hom}$$

factors trough $A_0(\sH) \to A_1(\tilde X_t)$ and $f_t : A_1(\tilde X_t) \to A_1(X)_{hom}$, for every fiber $\tilde{X}_t$.  By \cite[Lemma 6.6]{Vial 1} there is  finite  set  $\sE =\{\sH_1,\dots,\sH_n\}$ of irreducible components  of $\Hilb_1(\tilde X/\P^2)$, such that they obey the following technical condition: 
 
 \medskip
 
\it
$\forall t \in  B^o$, the set $\{cl(q_*[p^{-1}(u)]/  u \in \sH_i, t=\pi(u)\}$ span 
$H^2(\tilde X_t,\Q).\ \ \ \ \ (*)$ 
\rm.
\medskip

Let $f_i : \tilde \sH_i \to \sH_i$ be a resolution of singularities. By \cite[Prop. 6.7]{Vial 1}, for all $i$ there are smooth linear sections $\tilde B_i \to \tilde \sH_i$ of dimension 2, such that, for every $i \in (1,\dots,n)$ the  following composed map $r_i: \tilde B_i \to \P^2$ is surjective:

\begin {equation}\label{surfaces}
\CD r_i : \tilde B_i \to \tilde \sH_i@>{f_i}>> \sH_i \to \P^2 \endCD 
\end{equation}

The map $r_i$ is finite and, for any $t \in \P^2$, $r_i^{-1}(t) $ contains  a point in every connected component of the fiber of $\sH_i$ over $t$.  Let  $\tilde B = \coprod_{1 \le i \le n} \tilde B_i$  be the disjoint union of the surfaces $\tilde B_i$. Again by \cite[Prop. 6.7]{Vial 1}, there is a correspondence $\Gamma \in A^3(\tilde B  \times \tilde X)$ such that  $\Gamma = \oplus_i \Gamma_i $ where $\Gamma_i \in A^3 (\tilde B_i \times \tilde X)$ is the class of the image of $\sC_i$ inside $\tilde B_i \times \tilde X $ in the incidence diagram

 \begin{equation}\label{diagramunicurve}
  \CD  \sC_i @>{q_i}>> \tilde X \\
 @V{p_i}VV            @.  \\
\tilde B_i
 \endCD 
 \end{equation}

Here  $p_i : \sC_i \to \tilde B_i$ is the pullback of the universal family $\sC \to \sH_i$ along $\tilde B_i \to \tilde \sH_i \to \sH_i$.The correspondence $\Gamma \in A^3(\tilde B \times X)$ gives a map of motives

\begin{equation}\label{mapmotives} (\Gamma)_*: h(\tilde B) (1)= \sum_{1 \le i \le n} h(\tilde B_i)(1) \to h(X).\end{equation}

Let $h_i : \tilde R_i \to \tilde R_i$ be the normalization of the curve $R_i =r^{-1}_i(D)$, with $r_i$ as in (\ref{surfaces}) and $D$ the discriminant curve. For  every  $P \in D \subset \P^2$  and $i \in \{1,\dots n\}$, $r_i^{-1}(P)$ is a finite set of points, each one in a connected component of  the fiber $\tilde \sH_{i,P}$. Let us set

$$h: \tilde R := \coprod_{1 \le i \le n} \tilde R_i \to \tilde B$$

and  $\Gamma_{\tilde R} :=h^*(\Gamma) \in A^3(\tilde R \times \tilde X)=A_2( \tilde R \times \tilde X)$.
Then $\Gamma_{\tilde R}$  gives a map of motives $h(\tilde R)(1)\to h(X)$. Let $h( \tilde R) =\un \oplus h_1(\tilde R)\oplus \L$ be a C-K decompositon and let $h_1(\tilde R)(1) \to t(X)$ be the map induced by $\Gamma_{\tilde R}$. Then the associated maps on  Chow groups vanish, because  $A^p(h_1(\tilde R) (1))=A^{p-1}(h_1( \tilde R)) \ne 0$ only for $p=2$ while $A^2(t(X)=0$. In particular the composite map 
\begin{equation}\label{compmap}\CD A_0(\tilde R)_0= \bigoplus_{1\le i\ \le n} A_0( \tilde R_i)_)@>{h_*}>>A_0(\tilde B)_0@>{\Gamma_*}>>A_1(X)_{hom}\endCD
\end{equation} 

is 0. Let 

$$(j_D)_*: \bigoplus_{P\in D} A_1(\tilde X_P) \to A_1(X)_{hom}$$

be the sum of the $(j_P)_*$ and let us define $g_D : \bigoplus_{P\in D} A_1(\tilde X_P) \to A_0(\tilde R)_0$ by sending a class $[C] \in A_1(\tilde X_P)$, lying on a connected component of the fiber $\sH_{i,P}$, to  the class of the corresponding point $x_C \in r^{-1}(P)$ in $A_0( \tilde R)$. Then $(j_D)_*$ factors trough $g_D$ and the map  $A_0(\tilde R)_0 \to A_1(X)_{hom}$ in (\ref {compmap}). Therefore the map $(j_D)_*$ vanishes. 

\medskip

\noindent From \ref{mappafibre} we get

$$A_1(X)_{hom} = \Im(\bigoplus_{t\in \P^2} A_1(\tilde X_t) \to A_1( X)_{hom})= \Im(\bigoplus_{t\in B^o} A_1(\tilde X_t) \to A_1( X)_{hom}) \oplus$$
$$\oplus  \Im(\bigoplus_{P \in D}  A_1(\tilde X_P) \to A_1( X)_{hom});$$

\noindent where the map$\bigoplus_{P \in D}  A_1(\tilde X_P) \to A_1( X)_{hom}$ is 0. Therefore

from \cite[Prop. 6.7]{Vial 1} we get

$$A_1(X)_{hom}=\Im (\bigoplus_{t\in B^o} A_1(\tilde X_t) \to A_1( X)_{hom}) \subseteq \Im( \Gamma_* : A_0(\tilde B)_0 \to A_1(X)_{hom}).$$

\noindent and thus 

$$A_1(X)_{hom} =  \Im( \Gamma_* : A_0(\tilde B)_0 \to A_1( X)_{hom}) $$ 

Therefore the map of motives in (\ref{mapmotives}) induces a map on Chow groups 

\begin{equation}\label{chowgroups}A^3(h(\tilde B)(1))  = A_0(\tilde B)_0 \to A^3(t(X))=A_1(X)_{hom} \end{equation}
that is surjective. Let

$$h(\tilde B_i) =\un \oplus   h_1(\tilde B_i) \oplus  (\L)^{\rho_i} \oplus h^{alg}_2\oplus t_2(\tilde B_i)\oplus h_3(\tilde B_i) \oplus \L^2$$

be a reduced C-K decomposition and let $h(\tilde B) =\sum_{1 \le i \le n}h(\tilde B_i)$ be the corresponding decomposition for $\tilde B$.  Then

$$A^3((\tilde B)(1)) = A^3(t_2( \tilde B)(1)) \oplus A^3(h_3(\tilde B)(1)=  A^2(t_2(\tilde B) \oplus A^2( h_3(\tilde B)$$

Since the Chow groups  $A^i(t_2(\tilde B)(1)) \oplus  A^i(h_3( \tilde B)(1))$ and $A^i(t(X))$  vanish for $i \ne 3$, the transcendental motive $t(X)$ is a direct summand of 
$$t_2( \tilde B)(1)\oplus h_3( \tilde B)(1)=\sum_{1\le i \le n}((t_2(\tilde B_i)(1) \oplus h_3( \tilde B_i)(1)).$$
The motives $h_3( \tilde B_i)$, being of abelian type are finite dimensional. Therefore $t(X)$ is finite dimensional if  $t_2( \tilde B_i)$ is finite dimensional for every $i \in \{1,\dots n\}$. Assume that $t_2(\tilde B_i)$ is finite dimensional : than  $t(X)$ cannot be isomorphic to a direct summand of $M=h_3(\tilde B)(1)$, hence it is a direct summand of $t_2(\tilde B)(1)$. Suppose, on the contrary, that   $M \simeq N \oplus t(X)$ and let $f : M  \to t(X)$ and $g : t(X) \to M$ such that $f \circ g= id_{t(X)} = \pi^{tr}_4$, where $t(X)=(X,\pi^{tr}_4)$. Since $M$ and $t(X)$ are finite dimensional of different parity the map $f$ is smash-nilpotent, see [MNP, Prop.5.3.1].  Smash-nilpotent correspondences form a bilateral ideal  $\sI$  in the category $\sM_{rat}$, see \cite[7.4.3]{AK}, and  hence we get 
$id_{t(X) }\circ f \circ g=id_{t(X)} \in \sI$. Therefore, the projector $\pi^{tr}_4 =id_{t(X)}$, being smash-nilpotent, vanishes and we get a contradiction: $t(X)=0$.

 \par
 \end{proof}
 
 \subsection{Cubic fourfolds containing a plane}

 Let $X$ be a generic fourfold in $\sC_8$. Then $X$ contains a  plane $P$. Call $\tilde{X}$ the blow-up of $X$ along $P$ and $\pi:\tilde{X}\to \P^2$ the morphism that resolves the projection off $P$. The morphism $\pi$ is a fibration in quadric surfaces, whose fibers degenerate along a plane sextic $C$, which is smooth in the general case. The double cover $r : S \to \P^2$ ramified along $C$ is a K3 surface. Recall that the relative Hilbert scheme of lines $\mathcal H(0,1)$ of the morphism $\pi$ is an \'{e}tale projective bundle over $S$. The Stein factorization of the map $\sH(0,1) \to \P^2$ yields  $\sH(0,1)\to S \to \P^2$, where the first map is a $\P^1$-bundle.
 \begin{prop}\label{plane}
If $X$ is a general element in $\sC_8$  the transcendental motive $t(X)$ is isomorphic to  the motive $t_2(S)(1)$. Therefore if the motive of $S$ is finite dimensional then also $h(X)$ is finite dimensional .\end{prop}

\begin{proof} Since $A_0(\P^2)_0=0$ we have $A_1(X)_{hom} =A_1( \tilde X)_{hom}$. In order to show that $\sH_1= \sH(0,1)$ is the only component that we need to apply Prop. \ref{vialismo}, we need to check that the technical condition $(*)$ holds true for this Hilbert scheme. This is not hard to show, since the $H^2(\tilde X_t, \mathbf Q)$, for $t \in \P^2 -C$, is generated by the classes of any line of the two rulings of the quadric. In fact the two irreducible components $ \sH^{(1)}_1$ and $\sH^{(2)}_1$ of $\sH(0,1)$ over the point $t \in \P^2$, not lying on the discriminant, parametrize the lines in each ruling. From (\ref{mapmotives}) we get a map 
$t_2(S)(1) \to t(X)$ such  that $A^3(t_2(S)(1))= A_0(S)_0 \to A^3(t(X) =A_1(X)_{hom}$ is surjective  and we are left to show that it is an isomorphism. By \cite[Theorem 3.6]{SYZ} there is an isomorphism  $A_0(S)_0 \to A_0(F)_2$, where $F =F(X)$, that, together with the isomorphism $A_0(F)_2\simeq A_1(X)_{hom}$ in Theorem \ref{fano},  gives the isomorphism $A_0(S)_0 \simeq A^3(t(X))=A_1(X)_{hom}$.
\end{proof}  
 
 \begin{rk}  The above example  suggests that the statement in Conj. \ref{raziomotiva} cannot be inverted, in the sense that a generic element of $\sC_8$ is conjecturally not rational and yet there is an isomorphism $t(X) \simeq t_2(S)(1)$, with $S$ a K3 surface.  A similar result appears in \cite[Thm. 0.3]{Bull}, for all $X \in \sC_d$, where $d$ satisfies the following numerical  condition : $d =k^2 d_0$, with $k\in \Z$ and  $d \vert 2n^2+2n+2$, with $n \in \Z$.
 \end{rk} 
 
  \subsection{Cubic fourfolds fibered in del Pezzo sextics}

 Let  $X$ be a generic  fourfold in $\sC_{18}$. The fourfold $X$ contains an elliptic ruled surface $ T$ of degree 6 such that the linear system of quadrics in $\P^5$ containing $T$ is two dimensional. Let once again $r : \tilde X\to X $ be the blow-up of $X$ at $T$ and  $\pi : \tilde X \to \P^2$ the (resolution of the) map induced by the linear system of quadrics containing $T$. The generic fiber of $\pi$ is a del Pezzo surface of degree 6. The generic del Pezzo fibration $\pi$  obtained from a cubic fourfold in $\sC_{18}$ is a {\it good del Pezzo fibration} in the sense of \cite[Def. 11]{AHTV-A}. The discriminant curve  $D$ of $\pi$ has  two irreducible components, a smooth sextic $C$ and a sextic $\bar C$ with 9 cusps. As in the previous case the double cover $S \to \P^2$ branched on $C$ is a smooth K3 surface of degree 2. The goal of this section is to show that there is an isomorphism $t_2(S)(1) \simeq t(X)$. The main difference with the $\mathcal{C}_8$ case is that here the Picard rank of the generic fiber is higher, so we will need to consider surfaces inside two different Hilbert schemes of curves in order to obey the technical condition $(*)$ and hence to apply the constructions of Prop. \ref{vialismo}.
 
\medskip
 
Associated to the good del Pezzo fibration $\pi : \tilde X \to \P^2$  there is  a non-singular degree 3 cover $f : Z \to \P^2$ branched along a cuspidal sextic $\bar{C}$ (see \cite{AHTV-A}) where $Z$ is a non singular surface. Let $\sH(0,2)\to \P^2$ be the relative Hilbert scheme  of connected genus 0 curves of anti canonical degree 2 on the fibers. The Stein factorization yields an \'{e}tale $\P^1$-bundle $\pi_1: \sH(0,2) \to Z$. It is easy to see that, on every fiber, the $\P^1$-bundle is given by the strict transform of the lines through each of the 3 blown-up points $P_1,P_2,P_3\in \P^2$ of the corresponding del Pezzo of degree 6. This gives a diagram  
 $$\CD  \mathcal H(0,2) \\
 @V{\P^1}VV            @.  \\
 Z \\
 @V{f_1}VV \\
 \P^2  \endCD .$$
 
 \begin{prop}
The triple cover $ Z  \to \P^2 $ is an elliptic ruled surface and hence $t_2(Z) =0$ and $A_0(Z)_0 \simeq (\Alb Z)_{\Q} \simeq (\Jac E)_{\Q}$, with $E$ an elliptic curve.
\end{prop}

\begin{proof}Let $\bar{C} \subset \P^2$ be the ramification locus of the triple cover $f_1:  Z\to \P^2$. As it has been observed in \cite{AHTV-A}, for a generic cubic $X\in\sC_{18}$, $\bar{C}$ is a cuspidal degree 6 curve with 9 cusps. It is well known \cite{Mir} that such a triple cover is completely determined by the Tschirnhausen rank two vector bundle on $\P^2$ and a section of (a twist of) the relative $\mathcal{O}(3)$ on the associated projectivized $\P^1$-bundle. Let us denote $V$ the Tschirnhausen module. From Prop. 4.7 of \cite{Mir} we see that $\bar{C}$ belongs to the linear system $|-2c_1(V)|$, hence $c_1(V)=\sO_{\P^2}(-3)$. Then, by \cite[Lemma 10.1]{Mir},  the number of cusps is exactly $3c_2$, this means that $c_2(V)=3.$ With these data in mind we can use \cite[Prop. 10.3]{Mir} to compute the invariants of $Z$ and get

$$\chi=0,\ \ K^2=0,\ \  e(Z)=0.$$

\noindent Now, by \cite[Cor 2.3]{Shi} we see that that $V \cong \Omega_{\mathbb{P}^2}$, hence by \cite[Cor 10.6]{Mir} we have $p_g(Z)=0$, and $q(Z)=1$. This easily implies that the surface $Z$ is again an elliptic ruled surface. Note that such a triple plane being  an  elliptic ruled surface was first observed by Du Val in \cite{DV} by different methods. Since $p_g(Z)=0$ and  $Z$ is not of general type we get $t_2(Z)=0$. The rest follows from the isomorphism $A_0(Z)_0 \simeq (\Jac E)_{\Q}$.

\end{proof}
 
As we have already anticipated, in this case, considering just one Hilbert scheme will not be enough in order to apply Prop. \ref{vialismo}, since the fibers of $\pi$ have higher Picard rank. Hence we need to consider also $\mathcal H(0,3)$, the relative Hilbert scheme of curves of genus zero and canonical degree 3 inside the fibers. There are two 2-dimensional families of such curves on a del Pezzo sextic. One is given by the strict transforms of the lines in $\P^2$ that do not pass through any of the three base points. The second is given by conics passing through the three base points. We will call the former cubic curves of first type and the latter cubic curves of second type. The Stein factorization  $ \sH(0,3)\to  S\to \P^2  $  of the natural projection $\pi_2 :\mathcal{H}(0,3)\to \P^2$ 
 reflects this difference and displays $\mathcal{H}(0,3)$ as an \'{e}tale $\P^2$-bundle over a smooth degree two K3 surface $S$ \cite{AHTV-A}. It is straightforward to see that one $\P^2$ parametrizes the curves of first type and the other those of second type.\par
\noindent In order to apply Prop. \ref{vialismo} to the fibration $\tilde X \to \P^2$, we  prove  the following lemma.

\begin{lm}\label{teccond}
Let $\pi : \tilde X \to \P^2$ a del Pezzo fibration and let $D \subset \P^2$  be the discriminant curve. Then  the two components $\sH(0,2)=\sH_1$ and $\sH(0,3)=\sH_2$ of the Hilbert scheme $\sH/\P^2$ obey the technical condition $(*)$, i.e.
  $\forall t \in  B^o=\P^2 -D $, the set $\{cl(q_*[p^{-1}(u)]/  u \in \sH_i, t=\pi(u)\}$ 
  span $H^2(\tilde X_t,\Q)$, where $\pi_i : \sH_i \to \P^2$. 
 \end{lm}

\begin{proof}
 Fix a point $p\in \P^2$, such that the fiber $\tilde{X}_p$ over $p$ is a smooth del Pezzo sextic. Its Picard rank is 4 and the generators are the proper transform of a line and the three exceptional divisors. Let us denote $H,\ E_1, E_2$ and $E_3$ these divisor classes. Then, the fiber $(\sH_2)_p \subset \mathcal{H}(0,3)$ over $p$ contains at least a curve from the linear system $|H|$ and a curve from the linear system $|2H- E_1-E_2-E_3|$. On the other hand, the fiber $(\sH_1)_p \subset \mathcal{H}(0,2)$ over $p$ contain at least 3 curves from the linear systems $|H-E_1|$, $|H-E_2|$ and $|H-E_3|$. It is straightforward to see that linear combinations of these 5 divisor classes generate the whole $H^2(\tilde{X}_p,\mathbf{Q})$.\par
\end{proof}

\begin{thm}\label{del Pezzo fibr.} 
Let  $X$ be a generic  fourfold in $\sC_{18}$. Then $t(X)$ is isomorphic to $ t_2(S)(1)$.
\end{thm}

\begin{proof} From Lemma \ref{teccond} it follows that we can apply Prop. \ref{vialismo} to the Hilbert schemes $\sH_1$ and $\sH_2$. Hence, as in (\ref{surfaces}), we get smooth surfaces  $\tilde B_1 $ and $\tilde B_2$ with finite maps $r_1 : \tilde B_1 \to \P^2$ and $ r_2: \tilde B_2 \to \P^2$ that induce isomorphisms $A_0(\tilde B_1)_0=A_0(Z)_0$ and $A_0( \tilde B_2)_0 =A_0(S)_0$. The discriminant curve $D \subset  \P^2 $ has two irreducible components, the cuspidal sextic $ \bar C$  and the smooth sextic $C$.  Let $\tilde R_1\to  R_1$ and $\tilde R_2 \to R_2$ be  respectively the normalization of $R_1= r_1^{-1}(\bar C)$ and the normalization of $R_2=r_2^{-1}(C)$.  The surfaces $ Z$ and $S$ have reduced Chow-K\"unneth decompositions with  $t_2(Z)=0$ and   $h_3( S) =0$.
The curve $\tilde R_1$ is smooth of genus 1 and hence it is an elliptic curve birational (and hence isomorphic) to a curve $E$, such that $Z$ is birational to the product $\P^1 \times E$. Therefore
 $$A_0(\tilde R_1)_0 \simeq  A_0(Z)_0 =(\Jac E)_{\Q}.$$
 The K3 surface $S$ is a double cover of $\P^2$ ramified along $C$. The curve $ \tilde R_2 $, is a constant cycle curve( see  \cite[7.1]{Huy 1}),
 hence the map $ A_0(\tilde R_2)_0 \to A_0( S)_0 $ is the 0-map. From (\ref{compmap}) we get  that the map

$$A_0(\tilde R_1)_0 \oplus A_0(\tilde R_2)_0 \to   A_0(\tilde B)_0\simeq (A_0(Z)_0  \oplus A_0( S)_0) \to A_1(X)_{hom} $$

vanishes.Therefore   the map $A_0(\tilde R_1) =A_0(Z)_0 \to A_1(X)_{hom}$ is 0. The map $(\Gamma)_*)$ in (\ref{mapmotives}) gives a map

$$(\Gamma)_*: t_2(S)(1) \to t(X),$$
 such that the associated map on Chow groups $\A_0(S)_0 \to A_1(X)_{hom}$ in \ref{chowgroups} is surjective. In order to show that is also injective and hence  the map $(\Gamma)_*$  gives an isomorphism of motives, we apply  the same argument as in the proof of \cite[Thm. 3.6]{SYZ}. If  $\Psi(\alpha) = 0$ in $A_1( X)_{hom}$, with $\alpha \in A_0( S)_0$ then $ \sigma_*(\alpha) =0$, where $\sigma$ is the involution on $S$ coming from the double cover $ S \to \P^2$. Therefore $\alpha=0$. 
 \end{proof}  
 
 \begin{rk} G.Tabuada in \cite[Thm. 1.7]{Tab}  proves a result, similar to Thm. \ref{del Pezzo fibr.}, for a fibration $f: Y \to \P^2$, where $Y$ is a smooth 4-fold and the fibers of $f$ are sextic del Pezzo surfaces. There are finite flat morphisms $Z_2 \to\P^2$ , $Z_3 \to \P^3$ of degree 3 and 2 respectively,  such that, if the motives of $Z_2$ and $Z_3$  are {\it Schur-finite}, then  the motive of   $Y$ is Schur-finite. Note that, since a  finite dimensional motive is also Schur-finite, the motive of the surface $Z$ in(5.12) is Schur-finite 
 
\end{rk}
 
\section{Cubic fourfolds with an involution}

  Let $\sigma$ be  the  involution  on $\P^5$  defined by  
$$[x_0,x_1,x_2,x_3,x_4,x_5] \to [x_0,x_1,x_2,x_3,- x_4, - x_5] $$ 

A cubic fourfold $X$ fixed by $\sigma$ has an equation of the form 

\begin{equation}\label{cubicfu} 
C(x_0,x_1,x_2,x_3) + x^2_4L_1 +x^2_5L_2 + x_4x_5L_3=0
\end{equation}

where  $C$ has degree 3 and  $L_1, L_2,L_3$ are linear forms in $x_0,x_1,x_2,x_3$. C. Camere shows that this is the unique automorphism of $\P^5$ inducing a symplectic involution on $F(X)$ \cite[Sect. 7]{Ca}. The  locus of fixed points of $\sigma$ on $\P^5$  is the disjoint union of a $\P^3$ defined by $x_4 =x_5 =0$ and the line  $r$ joining the base points $P_4$ and $P_5$. The line $r$  is contained in $X$ and the fixed locus on $\P^3$ is the cubic surface $C=0$. The symplectic involution  $\sigma_F$ on $F(X)$ has 28 isolated points, i.e.  the line $r$ and the 27 lines on the cubic surface, plus a K3 surface $S$, consisting of the  lines joining a fixed point $Q_1$ on $\P^3$ and a point $Q_2$ on $r$ (see again \cite{Ca}).
Let us now project with center the line $r$ and let $\tilde{X}$ denote the blow-up of $X$ along $r$. The projection resolves into a morphism $\delta: \tilde{X} \to \P^3$, which is well-known to be a conic bundle with quintic degeneration locus $D$. 

\begin{lm}
The quintic hypersurface $D\subset \P^3$ has a cubic and a quadric irreducible components. For appropriate choices of the $L_i$ and of $C$ the sextic intersection curve is smooth and parametrizes rank one conics. For general choices of the $L_i$ the quadric has rank 3.
\end{lm}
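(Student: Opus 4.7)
The plan is to parameterize the fibers of the projection $\delta$ explicitly and read off the discriminant directly. For $a = [a_0{:}a_1{:}a_2{:}a_3] \in \P^3$, the plane $\Pi_a$ spanned by $r$ and $a$ can be parameterized by $[s{:}x_4{:}x_5]$ via $[sa_0{:}sa_1{:}sa_2{:}sa_3{:}x_4{:}x_5]$. Substituting into (\ref{cubicfu}) and factoring out the line $r=\{s=0\}\subset X$ leaves the residual conic
\[
Q_a : \ s^2 C(a) + L_1(a)\, x_4^2 + L_2(a)\, x_5^2 + L_3(a)\, x_4 x_5 = 0,
\]
whose symmetric matrix is block-diagonal:
\[
M_a = \begin{pmatrix} C(a) & 0 & 0 \\ 0 & L_1(a) & L_3(a)/2 \\ 0 & L_3(a)/2 & L_2(a) \end{pmatrix}.
\]

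From the factorization $\det M_a = \tfrac{1}{4} C(a)\bigl(4 L_1(a) L_2(a) - L_3(a)^2\bigr)$ the discriminant $D$ visibly decomposes as the union of the cubic surface $\{C = 0\}$ and the quadric surface $\{4L_1 L_2 - L_3^2 = 0\}$, which settles the first assertion. I would then analyze the rank of $M_a$ along the intersection curve $\Gamma$: at any $a \in \Gamma$ the $(1,1)$-entry vanishes together with the determinant of the lower-right $2\times 2$ block, so $\mathrm{rank}\,M_a \leq 1$, with equality as soon as some $L_i(a) \neq 0$. Hence $\Gamma$ parameterizes rank-one conics (double lines), provided the base point $\{L_1 = L_2 = L_3 = 0\}$ does not lie on $\Gamma$.

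For the rank of the quadric, if the $L_i$ are linearly independent (a Zariski-open condition on the triple) one may change coordinates so that $L_i = x_{i-1}$, whence $4 L_1 L_2 - L_3^2 = 4x_0 x_1 - x_2^2$ is non-degenerate in $(x_0,x_1,x_2)$ and independent of $x_3$; so the quadric has rank $3$ with unique singular point $[0{:}0{:}0{:}1]$. Finally, for smoothness of $\Gamma$ I would apply Bertini to the linear system of cubics $C$: its base locus on $\{Q=0\}$ is empty, so a generic $C$ yields $\{C=0\} \cap \{Q=0\}$ smooth away from $\mathrm{Sing}(Q)$, and the open condition that $C$ not vanish at the isolated point of $\mathrm{Sing}(Q)$ then removes that point from $\Gamma$.

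The main (mild) obstacle is this last smoothness verification: one must simultaneously ensure transversality of $\nabla C$ and $\nabla Q$ along $\Gamma$ \emph{and} arrange that $\mathrm{Sing}(Q)$ avoids $\{C=0\}$. Both are open nonempty conditions on the tuple $(C,L_1,L_2,L_3)$, so the set of "appropriate" choices realizing a smooth sextic $\Gamma$ parameterizing rank-one conics is nonempty, completing the argument.
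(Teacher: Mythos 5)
Your computation of the residual conic, its block-diagonal matrix $M_a$, and the factorization $\det M_a = \tfrac14 C\,(4L_1L_2 - L_3^2)$ is exactly the paper's argument (the paper's displayed discriminant $C\cdot(L_1L_2-\tfrac14 L_3)$ has an obvious typo, $L_3$ for $L_3^2$), and your rank analysis along the intersection and for the quadric $4L_1L_2-L_3^2$ in three independent linear forms matches the paper's as well. The one place you genuinely diverge is the smoothness of the sextic: the paper exhibits a single explicit example ($C$ the Fermat cubic, $L_1=t-z$, $L_2=t+z$, $L_3=x_0-x_1-x_2$) and certifies smoothness of the intersection by a Macaulay2 computation, whereas you argue by Bertini on the rank-3 cone $Q$ that a generic cubic $C$ cuts out a smooth curve away from the vertex and then impose the open condition $C(\mathrm{Sing}\,Q)\neq 0$ --- which, as you correctly note, is the same condition guaranteeing that the vertex $\{L_1=L_2=L_3=0\}$ (where the conic could degenerate below rank one) stays off $\Gamma$. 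Your route buys a conceptual, computer-free proof that the ``appropriate'' locus is a dense open set of parameters rather than a single verified point; the paper's route buys a concrete certified example. Both establish the lemma as stated; if one wants to be fussy, your genericity argument should be intersected with the (open, nonempty) condition that the resulting fourfold $X$ be smooth, but that is immediate since both loci are open in an irreducible parameter space.
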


\begin{proof}

Let $p:=[a:b:c:d]\in\P^3$, in order to study the conic over $p$ we need to study the 
intersection of $X$ with the plane $\P^2_p:= \langle p,P_4,P_5\rangle \subset \P^5 $, where $\langle \cdot \rangle$ denotes as usual the linear span, and $P_4, P_5$ are the base points on $r$. Hence we substitute inside equation $\ref{cubicfu}$ the values $\lambda[0:0:0:0:1:0]+\mu[0:0:0:0:0:1]+\gamma[a:b:c:d:0:0]$, with $\lambda,\mu,\gamma \in \mathbb{C}.$ Recall that in this plane the equation of $r$ is $\gamma=0$. Then, dividing by $\gamma$ the cubic equation in $\gamma,\lambda$ and $\mu$ we obtain

\begin{equation}\label{conica}
\gamma^2C(a,b,c,d)+\lambda^2L_1(a,b,c,d)+\mu^2L_2(a,b,c,d)+\lambda \mu L_3(a,b,c,d).
\end{equation}

This is the conic obtained from the symmetric matrix

$$ \left(
  \begin{array}{ccc}
  C(a,b,c,d) & 0 & 0 \\
  0 & L_1 & \frac{1}{2}L_3 \\
  0 & \frac{1}{2}L_3 & L_2 
  \end{array}
\right).$$

Hence one easily sees that the equation of $D$ is $C\cdot(L_1L_2 - \frac{1}{4}L_3)$. The mere equation $L_1L_2 - \frac{1}{4}L_3$ shows that the quadric has at most rank 3 and that for general $L_i$ this is the case. Let us denote by $Q$ the quadric surface. Suppose now $L_3=x_0-x_1-x_2$, $L_1=(t-z)$, $L_2=(t+z)$ and $C$ is the Fermat cubic. Then the quadric has equation $-(x-y-z)^2 + t^2 -z^2$ and rank 3. A quick Macaulay2 \cite{Mac2} routine shows that the intersection with the Fermat cubic is a smooth sextic curve $Y$, and from the matrix representing the conic one sees that the sextic curves parametrizes conics of rank 1.
\end{proof}

The surface $S$ is a double cover of the cubic surface $C=0$ ramified along the degree 6 curve $Y$. It is straightforward to see that $S$ parametrizes irreducible (linear) components of degenerate conics, that are fixed by the involution. If one takes the double cover $W \stackrel{2:1}{\to}Q$, ramified along $Y$, this parametrizes the irreducible components of degenerate conics that are not  fixed by the involution (except for double lines, parametrized by $Y$). It is a classical construction that double covers $W$ of quadric cones, ramified along a smooth genus 4 sextic are del Pezzo surfaces of degree 1, and the double cover is induced by the linear system $|-2K_W|$, where $K_W$ is the canonical bundle. We observe that by Kodaira vanishing it is easy to see that $q(W)=0$. \medskip
 \noindent The  Abel-Jacobi map induces an isomorphism

$$H^{3,1}(X) \simeq H^{2,0}(F(X)) \simeq H^{2,0}(S)$$ 

and hence $H^2_{tr}(F,\Q) \simeq H^2_{tr}(S,\Q)$. By \cite[Thm. 3.1]{Lat 2} there is a correspondence $\Gamma \in A^3(S \times X)$
inducing a surjective homomorphism $ A_0(S)_0 \to A_1(X)_{hom}$.\par
\noindent Let  $h(S) =\un \oplus h^{alg}_2 \oplus t_2(S) \oplus \L^2$ be  a Chow-K\"unneth decomposition and let 

\begin{equation} \label {Latmap}\Gamma_* :  t_2(S)(1) \to t(X)\end{equation} 

be the map of motives induced by $\Gamma$. We have

$$A^3(t(X)) = A_1(X)_{hom}   \ ; \  A^3(t_2(S)(1))  =A^2(t_2(S)) =A^2(S)_0$$

and $A^i(t(X)) =A^i(t_2(S)(1))=0$ for $i \ne 3$. Therefore $\Gamma$ induces a surjective map on all Chow groups and hence $t(X)$ is a direct summand of $t_2(S)(1)$. 

The following result shows that $\Gamma_*$ is in fact an isomorphism.

\begin{prop}
The map of motives in \ref{Latmap} is an isomorphism.\end{prop}
  
\begin{proof} It is enough to show that the surjective map   $A_0(S)_0 \to A_1(X)_{hom}$ is an isomorphism. Let  $\alpha =[l_1] - [l_2] \in A_0(S)_0$ be such that $\Gamma_*(\alpha)=0 $. 
Then $[l_1]=[l_2]$ corresponds to a double line on a singular fiber of $\delta : \tilde X \to \P^3$. Let $\tau$ be the involution on $S$ associated to  the double cover $S \to C$, that is  ramified along the sextic $Y$. Since $Y$ parametrizes double lines on the singular fibers we have that $[l_1]=[l_2]$ belong to $Y$ (more precisely to the branch locus inside $S$, which is isomorphic to $Y$). The cubic surface $C$ is rational and hence $A_0(C)_0=0$. Therefore, by the same argument as in \cite[7.1]{Huy 1}, the fixed locus of $\tau$ is a constant cycle curve, i.e. the  map $A_0(Y)_0 \to A_0(S)_0$ vanishes, and we get $\alpha=0$ in $A_0(S)_0$.
\end{proof}


\begin{thebibliography}{10} 

\bibitem [Add]{Add}N.Addington, {\it On two rationality conjectures for cubic four folds}. Math. Res. Letters 23 (1), (2016) , 1-13. 

\bibitem [AT]{AT} N.Addington and R.Thomas, {\it Hodge theory and derived categories of cubic four folds}, Duke Math. J. {\bf 163}(2014), no.10,  1885-1927.

 \bibitem [AHTV-A] {AHTV-A}N.Addington,B.Hassett,Y.Tschinkel and A.Varilly-Alvarado,{\it Cubic fourfolds fibered in sextic  del Pezzo surfaces}, arXiv:1606.05321.

\bibitem [Am] {Am}, E.Amerik, {\it A computation of invariant of a rational self-map}, Ann. Fac. Sci.Toulouse Math., 18, (2009),no.3 ,445-457

\bibitem[An]{An} Y. Andr\'e, {\it Une introduction aux motifs,} Panoramas et
   synth\`eses, SMF, 2004.
   
 \bibitem [AK]{AK}  Y. Andr\'e and B.Kahn {\it Nilpotence, radicaux and structures monoidales},rend.Sem.Mat.Univ.Padova,Vol 108,(2002),107-291

 \bibitem [ABBV] {ABBV}A.Auel, M.Bernadara, M.Bolognesi, A.Varilly-Alvarado,{\it Cubic fourfolds containing a plane and a quintic del Pezzo surface}, Algebraic Geometry , Volume 1, Issue 2 (March 2014), p. 181-193

\bibitem [Ba]{Ba} K. Banerjee, {\it Algebraic cycles on the Fano variety of a cubic fourfold}, arxiv:1609.05627v1[math.AG], Sept 2016.

\bibitem[BD]{BD}A.Beauville and R.Donagi,{\it La variete' de droites d'une hypersurface cubique de dimension 4},C.R. Acad. Sci. Paris,Ser. I {\bf 301}91985)703-706.

\bibitem[BB]{BB} M.Bernardara and M.Bolognesi, {\it Categorical representability and intermediate Jacobians of Fano threefolds}, EMS Series of Congress Reports ''Derived categories in algebraic geometry'', p.1--25, EMS Ser. Congr. Rep., Eur. Math. Soc., Zurich, 2012.


\bibitem[BRS]{BRS}M.Bolognesi, F.Russo and G.Stagliano' {\it Some loci of rational cubic fourfolds}, arxiv:1504.05863,  Math. Ann. (2018). https://doi.org/10.1007/s00208-018-1707-7

 \bibitem [Bull]{Bull}T-H B\"ulles,{\it Motives of moduli spaces on K3 surfaces and of special cubic four folds},arXiv:1806.0828v1,(2018)
 
  \bibitem [Ca]{Ca} C.Camere ,{\it Symplectic involutions of holomorphic symplectic fourfolds}, Bull.Lond. Math. Soc. 44 no. 4(2012),687-702

 
 \bibitem[deC-M]{deC-M} A.de Cataldo and L.Migliorini, {\it The Chow Groups and the Motive of the Hilbert scheme of points on a surface}, Journal of Algebra {\bf 251}, (2002), 824-848.

 \bibitem [DelP-P] {DelP-P} A.Del Padrone and C.Pedrini {\it Derived categories of coherent sheaves and motives of K3 surfaces}, Regulators, Contemp. Math 571,AMS (2012),219-232
 
\bibitem [DV]{DV} P. du Val {\it On Triple Planes having Branch Curves of order not greater than twelve.},  Proc. London Math. Soc. S2-39 no. 1, 68.  
 
   \bibitem [GG]{GG} S.Gorchinskiy and V.Guletskii. {\it 
Motives and representability of algebraic cycles on threfolds over a field},
J.Alg. Geometry {\bf 21} (2012) 343-373.

   
 \bibitem [Has 1]{Has 1} B.Hassett, {\it Special cubic fourfolds}, Compositio Mathematica, {\bf 120} (2000)1-23

\bibitem [Has 2]{Has 2} B.Hassett, {\it Cubic fourfolds, K3 surfaces and rationality questions }, Lecture notes for a 2015 CIME-CIRM summer school.

   \bibitem [Huy 1]{Huy 1} D.Huybrechts (with an appendix by C. Voisin), { \it  Curves and cycles on K3 surfaces}. Algebraic Geometry 1 (2014), 69-106.

\bibitem [Huy 2 ] {Huy 2} D.Huybrechts, {\it Motives of derived equivalent K3 surfaces} arXiv:1702.03178 [math.AG] , February 2017.

  \bibitem[KMP]{KMP} B. Kahn, J. Murre and C. Pedrini, 

\newblock {\it On the transcendental part of the motive of a surface}, 
pp. 143--202 in "Algebraic cycles and Motives Vol II", 
\newblock London Math. Soc. LNS {\bf 344}, Cambridge University Press, 2008. 

\bibitem [Ki]{Ki} S.I.Kimura. {\it A note on 1-dimensional motives}  in "Algebraic cycles and Motives Vol II", 

\newblock London Math. Soc. LNS {\bf 344}, Cambridge University Press, 2008, 203-213.
  
  
  \bibitem [Kuz]{Kuz} A. Kuznetsov, {\it Derived categories and cubic fourfolds},Cohomological and geometric approaches to rationality problems, Progr.Math. vol 282, (2010)  219-243
 

\bibitem [Lat 1]{Lat 1} R.Laterveer. {\it A remark on the motive of the Fano variety of lines of a cubic}, arXiv:1611.08818v1 [math.AG] , November 2016

\bibitem [Lat 2] {Lat 2} R.Laterveer. {\it On the Chow group of certain cubic fourfolds}, arXiv:1703.03990v1 [math.AG] , March 2017


\bibitem[MNP]{MNP} J.Murre, J.Nagel and C.Peters, 
{\it Lectures on the theory of pure Motives}, 
AMS University Lectures Ser. Vol 61 (2013)

\bibitem[Mac2]{Mac2} D.R.Grayson and M.E.Stillman. {\it Macaulay2, a software system for research in algebraic geometry}, Available at http://www.math.uiuc.edu/Macaulay2/.
        

\bibitem [Mo]{Mo} D.Morrison {\it Isogenies between algebraic surfaces with geometric genus 1},Tokyo Journal of Mathematics 10.1 (1987), 179-187.

\bibitem [Mir] {Mir} R.Miranda, {\it Triple covers in algebraic Geometry}, American J. of Math, vol 107 (1985), 1123-1158.

\bibitem[NS]{NS} J.Nagel and M.Saito,{\it Relative Chow-K\"unneth decompositions for conic bundles and Prym varieties}, Int.Math.Res.Not.IMRN {\bf 16},(2009),2978-3001.
 
 \bibitem [Ped] {Ped}C.Pedrini, {\it On the finite dimensionality of a K3 surface}, Manuscripta Math. 138,59-72 (2012)


\bibitem [Shen]{Shen} M.Shen,{\it Surfaces with involution and Prym constructions}, arXiv:1209.5457 

\bibitem[SYZ]{SYZ} J. Shen, Q,Yin and X.Zhao, {\it derived categories of K3 surfaces, O'Grady's filtration and zero cycles on holomorphic symplectic varieties},arXiv:1705.06953v1[math.AG]


\bibitem [SV 1]{SV 1} M.Shen and C.Vial, {\it On the Chow groups of the variety of lines of a cubic fourfold}, arXiv:1212.0552v1 [math.AG] ,Dec. 2012.

\bibitem [SV 2]{SV 2} M.Shen and C.Vial, {\it The Fourier transform for certain Hyperk\"alher fourfolds}, Memoirs of the AMS 240, no. 1139, (2014), 1-104.
 
\bibitem [Shi]{Shi}T.Shirane ,{\it A note on normal triple covers over $\P^2$ with branch divisors of degree 6}, Branched coverings, degenerations and related topics 2013,
Tokyo Metropolitan University (Japan) (2013 3.7).
\bibitem[Tab]{Tab} G.Tabuada,{\it Schur-finiteness (and Bass-finitess)conjecture for quadric fibrations and families of sextic Du Val del Pezzo  surfaces}, arXiv:1708.05382v6[ Math.AG] 13 Mar.2019

\bibitem [TZ]{TZ} Z.Tian and R.Zong, {\it On cycles on rationally connected varieties}, Compos. Math. 150, no. 3, (2014),396-408.


 \bibitem [Vial 1] {Vial 1}C.Vial. {\it Algebraic cycles and fibrations}, Doc.Math {\bf 18},(2013), 1521-1553.

\bibitem [Vial 2] {Vial 2} C.Vial {\it Pure motives with representable Chow groups},   Comptes Rendus 348 (2010) 1191Ð1195

\bibitem[Vois 1]{Vois 1}C.Voisin, {\it Theoreme de Torelli pour le cubiques de $\P^5$}, Inv.Matn. {\bf b6}, (1986) 577-601.

  \bibitem [Vois 2]{Vois 2} C.Voisin.{\it Intrinsic pseudo-volume forms and K-correspondences},The Fano Conference, Univ.Torino,Turin (2004), 761-792. 
  
  \bibitem [Vois 3] {Vois 3} C.Voisin.{\it  Bloch's conjecture for Catanese and Barlow surfaces}, J. Differential Geometry 97 (2014) 149-175.
 
\bibitem [Yang]{Yang} J. Yang, {\it On quintic surfaces of general type}, Trans. Am.Math.Soc.  Vol. 295  no.2  (1986) 431-473.
 
 \bibitem[Za]{Za}Y.Zarhin . {\it Algebraic cycles over cubic fourfolds}, Boll.Un.Mat Ital. Vol 7,No4-B(1990),833-847.
 
  
 \end{thebibliography}
  \end{document}